\documentclass[reqno,11pt]{amsart}
\usepackage{amsmath,amssymb,amsthm, verbatim}
\usepackage{url}
\usepackage[usenames, dvipsnames]{color}

\usepackage[letterpaper,hmargin=1in,vmargin=1in]{geometry}

\usepackage{graphicx}
\usepackage{enumerate,pinlabel}
\usepackage{mathrsfs,graphicx,url}
\usepackage[usenames,dvipsnames]{xcolor}
\usepackage[colorlinks=true,linkcolor=Black,citecolor=Black, urlcolor=Black,pagebackref]{hyperref}

\numberwithin{equation}{section}

\theoremstyle{plain}
\newtheorem{theorem}{Theorem}[section]

\newtheorem*{theorem*}{Theorem}
\newtheorem*{lemma*}{Lemma}
\newtheorem{lemma}[theorem]{Lemma}
\newtheorem{proposition}[theorem]{Proposition}
\newtheorem{corollary}[theorem]{Corollary}

\theoremstyle{definition}

\theoremstyle{remark}
\newtheorem{remark}[theorem]{Remark}


\newcommand{\ep}{\varepsilon}

\newcommand{\R}{\mathbb{R}}

\newcommand{\C}{\mathbb{C}}
\newcommand{\D}{\mathcal{D}}

\newcommand\supp{\mathop{\rm supp}}

\newcommand\real{\mathop{\rm Re}}
\newcommand\imag{\mathop{\rm Im}}
\newcommand\loc{\operatorname{loc}}

\newcommand\Res{\mathop{\rm Res}}
\newcommand*{\defeq}{\mathrel{\vcenter{\baselineskip0.5ex \lineskiplimit0pt

                     \hbox{\scriptsize.}\hbox{\scriptsize.}}}%
                     =}
                     
\newcommand*{\qefed}{=\mathrel{\vcenter{\baselineskip0.5ex \lineskiplimit0pt

                     \hbox{\scriptsize.}\hbox{\scriptsize.}}}}

\title[Measure potentials on the line]{Semiclassical estimates for measure potentials on the real line}
\begin{document}
\author{Andr\'es Larra\'in-Hubach}
\address{Department of Mathematics, University of Dayton, Dayton, OH 45469-2316, USA}
\email{alarrainhubach1@udayton.edu}
\author{Jacob Shapiro}
\address{Department of Mathematics, University of Dayton, Dayton, OH 45469-2316, USA}
\email{jshapiro1@udayton.edu}

\begin{abstract}
 We prove an explicit weighted estimate for the semiclassical Schr\"odinger operator $P = - h^2 \partial^2_x + V(x;h)$ on $L^2(\R)$, with $V(x;h)$ a finite signed measure, and where $h >0$ is the semiclassical parameter. The proof is a one dimensional instance of the spherical energy method, which has been used to prove Carleman estimates in higher dimensions and in more complicated geometries. The novelty of our result is that the potential need not be absolutely continuous with respect to Lebesgue measure. Two consequences of the weighted estimate are the absence of positive eigenvalues for $P$, and a limiting absorption resolvent estimate with sharp $h$-dependence. The resolvent estimate implies exponential time-decay of the local energy for solutions to the corresponding wave equation with a compactly supported measure potential, provided there are no negative eigenvalues and no zero resonance, and provided the initial data have compact support. 
\end{abstract}

\maketitle


\section{Introduction and statement of results} \label{introduction section}

The goal of this note is to study the spectral and scattering theory for the one dimensional semiclassical Schr\"odinger operator,
\begin{equation} \label{P}
 P = P(h) \defeq - h^2 \partial_x^2 +V(x;h) : L^2(\R) \to L^2(\R), \qquad h > 0,
\end{equation}
with potential $V = V(x;h)$ a real, finite signed Borel measure on $\R$, which may depend on the semiclassical parameter $h$. Here and below, $L^2(\R)$ is the usual Hilbert space of equivalence classes of functions $u : \R \to \C$ which are measurable with respect to the Lebesgue sigma algebra on $\R$, and for which $\int_\R |u|^2dx <\infty$, where $dx$ denotes Lebesgue measure. 

Self-adjointness of singular Sturm-Liouville operators encompassing \eqref{P} was systematically addressed in earlier works \cite{hrmy01, hrmy12, egnt13}. With the objective of being self contained, we proceed in elementary fashion to specify the domain  $\mathcal{D}$ of $P$ as a certain dense subspace of $L^2(\R)$ which is contained in the Sobolev space $H^1(\R)$. Recall each $u \in H^1(\R)$ has a (unique) continuous, bounded representative, which we denote by $u_c$. Thus, for $u \in H^1(\R)$, we prescribe the product of $u$ and $V$ to be the complex Borel measure $u_c V$, and define the expression
\begin{equation} \label{apply P}
Pu \defeq -h^2 \partial^2_x u + u_c V
\end{equation}
 in the sense of distributions on $\R$. Using the calculus of functions of bounded variation (which we review in Section \ref{BV review section}), we show in Section \ref{self adjointness section} that (for all $h > 0$) $P$ is self-adjoint with respect to
\begin{equation} \label{D}
    \D \defeq \{u \in H^1(\R) : u' \in L^\infty(\R), \text{ and } Pu  \in L^2(\R) \}.
\end{equation}

The main result of this note, whose proof appears in Section \ref{measure section}, is the following weighted estimate on $L^2(\R)$:
\begin{theorem} \label{meas Carleman est thm}
Fix $\delta > 0$. For all $E = E(h) > 0$ (which may depend on $h$), $\ep \in [0,1]$, $h > 0$, and $u \in \mathcal{D}$ with $(|x| + 1)^{(1 + \delta)/2} (P - E\pm i\ep) u \in L^2(\R)$, 
\begin{equation} \label{meas Carleman est} 
\begin{split}
  \int_{\R} (|x| + 1)^{-1-\delta}(E|u|^2 &+ |hu'|^2)dx  \\
  &\le  C(V, E, h , \delta ) \int_{\R} (|x| + 1)^{1+\delta} |(P- E \pm i\ep) u |^2 dx.
  \end{split}
\end{equation}
Here, 
\begin{gather}
C(V,E,h, \delta) \defeq e^{C_1(V,E,h,\delta)} \Big( \big(\frac{4}{h^2} + \frac{2}{h} + \frac{1}{Eh^2} \big(2 + 2E + \frac{\|V\|^2}{h^2} \big)^2  e^{C_1(V,E,h,\delta)}  \Big), \label{C} \\
C_1(V, E, h, \delta) \defeq 2 \delta^{-1} + E^{-1/2} h^{-1} \|V\|, \label{C1}
\end{gather}
and $\| V\| \defeq |V|(\R)$, with$|V|$ the total variation of $V$, defined by
\begin{equation*}
|V| = V^+ + V^{-},
\end{equation*}
where $\{V^+, V^-\}$ is the Jordan decomposition of $V$ (see, e.g., \cite[Theorem 3.4]{fo}).
\end{theorem}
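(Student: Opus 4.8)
The plan is to run the one-dimensional instance of the spherical energy (Morawetz) method: pair the equation with a first-order multiplier adapted to the radial variable $r=|x|$, read off the left side of \eqref{meas Carleman est} from the principal terms of the resulting energy identity, and control the error terms with one Gr\"onwall step (for the measure) and two auxiliary identities (obtained from the multiplier $\overline u$). First I would record the regularity of $u\in\D$: it has a bounded continuous representative $u_c$, and rearranging \eqref{apply P} gives $-h^2u'' = f - u_cV + (E\mp i\ep)u$ in the sense of distributions, where $f\defeq(P-E\pm i\ep)u$. Since $f\in L^2_\loc$, $u\in L^1_\loc$, and $u_cV$ is a finite measure, $u''$ is a locally finite complex measure; hence $u'$ has a representative of locally bounded variation with one-sided limits everywhere, continuous off the atoms of $V$ and jumping by $h^{-2}u_c(x_0)V(\{x_0\})$ across each atom $x_0$. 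I would fix this representative and record the BV integration-by-parts formula; alternatively, one may mollify $V$, prove the estimate in the smooth case, and pass to the limit, the constant \eqref{C} being stable because $\|V_n\|\le\|V\|$.

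For the \emph{main identity}, let $\beta$ be the odd primitive of $(|x|+1)^{-1-\delta}$, so $\beta'(x)=(|x|+1)^{-1-\delta}\ge 0$ and $\|\beta\|_{L^\infty}=1/\delta$. Writing the equation as $-h^2u''-Eu+u_cV = f\mp i\ep u$, I would pair with $\overline{\beta u'}$, integrate over $\R$, and take real parts. Using the BV calculus, the principal terms combine to $\tfrac12\int_\R(|x|+1)^{-1-\delta}(E|u|^2+h^2|u'|^2)\,dx$, i.e.\ half the left side of \eqref{meas Carleman est}, while the boundary contributions at $\pm\infty$ vanish since $u\in L^2$ and $u'\in L^2\cap L^\infty$. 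What remains on the other side are: (i) the forcing term $\mathrm{Re}\int_\R\beta\,\overline{u'}f\,dx$, bounded by Cauchy--Schwarz by $\delta^{-1}\big\|(|x|+1)^{(1+\delta)/2}f\big\|_{L^2}\big\|(|x|+1)^{-(1+\delta)/2}u'\big\|_{L^2}$ and then absorbed; (ii) the term $\pm\ep\int_\R\beta\,\mathrm{Im}(u\overline{u'})\,dx$, bounded by $\ep\|u\|_{L^2}\|u'\|_{L^2}$; and (iii) the potential term $\mathrm{Re}\int_\R\beta\,\overline{u'}u_c\,dV$ together with the atomic corrections produced by the jumps of $u'$ (the latter being favourably signed or dominated by $h^{-2}\|u_c\|_{L^\infty}^2\|V\|^2$).

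To dispose of (iii) without absolute continuity I would run the companion spherical-energy estimate for $\mathcal F(r)\defeq w(r)\big(h^2|u'(r)|^2+E|u(r)|^2\big)$, with $w$ a bounded positive weight comparable to $e^{\pm1/\delta}$ chosen so that $d\mathcal F$ picks up a favourable term $\lesssim(|x|+1)^{-1-\delta}\mathcal F\,dr$; from the same equation and the inequality $|u_cu'|\le\tfrac1{2h\sqrt E}(E|u_c|^2+h^2|u'|^2)$ one obtains $|d\mathcal F|\le\big(2(|x|+1)^{-1-\delta}\,dr+\tfrac1{h\sqrt E}\,d|V|\big)\mathcal F+2|f||u'|\,dr$ (plus favourable atomic terms). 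Integrating inward from $\pm\infty$, where $\mathcal F\to0$, Gr\"onwall bounds $\sup_r\mathcal F(r)$ by $e^{C_1(V,E,h,\delta)}$, with $C_1$ exactly \eqref{C1}, times $\int_\R|f||u'|\,dx+\ep\|u\|_{L^2}\|u'\|_{L^2}$; then $\big|\mathrm{Re}\int_\R\beta\,\overline{u'}u_c\,dV\big|\le\|\beta\|_{L^\infty}\int_\R|u'||u_c|\,d|V|\le\tfrac1{2\delta h\sqrt E}\|V\|\sup_r\mathcal F$ finishes (iii). It remains to bound $\|u\|_{L^2},\|u'\|_{L^2},\|u_c\|_{L^\infty}$ by $\big\|(|x|+1)^{(1+\delta)/2}f\big\|_{L^2}$ and the left side of \eqref{meas Carleman est}: pairing the equation with $\overline u$, the imaginary part gives $\ep\|u\|_{L^2}^2=\big|\mathrm{Im}\int_\R f\overline u\,dx\big|\le\big\|(|x|+1)^{(1+\delta)/2}f\big\|_{L^2}\big\|(|x|+1)^{-(1+\delta)/2}u\big\|_{L^2}$, the real part gives $h^2\|u'\|_{L^2}^2=E\|u\|_{L^2}^2-\int_\R|u_c|^2\,dV+\mathrm{Re}\int_\R f\overline u\,dx$, and the one-dimensional Sobolev inequality $\|u_c\|_{L^\infty}^2\le\|u\|_{L^2}\|u'\|_{L^2}$ closes the system (solving the resulting quadratic inequality for $\|u'\|_{L^2}$ is where the factor $2+2E+\|V\|^2h^{-2}$ appears). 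Combining everything, using Cauchy--Schwarz and the arithmetic--geometric inequality to absorb every reappearance of the left side of \eqref{meas Carleman est}, and tracking the constants yields \eqref{meas Carleman est} with $C$ in the form \eqref{C}: the $e^{C_1}$ from the Gr\"onwall step, the $4h^{-2}+2h^{-1}$ from the direct forcing terms, and the $(Eh^2)^{-1}(2+2E+\|V\|^2h^{-2})^2e^{C_1}$ from the $\ep$- and Sobolev-dependent pieces.

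I expect the main obstacle to be the measure/BV bookkeeping in the main identity: justifying the integration by parts when $u'$ genuinely jumps at the (possibly countably infinitely many) atoms of $V$ and $u''$ is only a locally finite measure, and verifying that the atomic contributions are favourably signed or dominated by $h^{-2}\|u_c\|_{L^\infty}^2\|V\|^2$ rather than producing an uncontrolled term. A close second is the self-referential structure of the estimate --- every error term in step two is controlled only modulo the left side of \eqref{meas Carleman est} itself --- which forces one to keep the numerical constants precise enough that what is reabsorbed is a strict fraction of the left side; this is the source of the intricate form of $C(V,E,h,\delta)$ in \eqref{C}.
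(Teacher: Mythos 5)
Your plan follows the same broad strategy as the paper (spherical energy method, BV calculus for $u'$, pointwise energy $F=|hu'|^2+E|u|^2$, auxiliary identities from pairing with $\overline u$), but organizes it differently: you propose a main identity with the bare multiplier $\beta$ (primitive of $(|x|+1)^{-1-\delta}$) plus a separate Gr\"onwall-type companion estimate for $\sup F$, whereas the paper folds everything into a single weight $w_\eta$ whose exponent contains $|V_c|$, a Gaussian mollification of $|V_d|$, and $(|x|+1)^{-1-\delta}$. The paper's choice lets the term $F\,dw$ absorb the perturbation $2\mathrm{Re}(wu\overline{u'}V)$ directly, while you first produce the positive term from $\beta'$ and then call on $\sup F$ to control the $V$-error; that split is workable in principle but creates the extra $\delta^{-1}$ and $\|V\|/(h\sqrt E)$ factors you will have to tuck back into $e^{C_1}$.

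The genuine gap is in your treatment of the atoms of $V$. You state the Gr\"onwall hypothesis as a \emph{linear} inequality
$|d\mathcal F|\le\bigl(2(|x|+1)^{-1-\delta}\,dr+\tfrac1{h\sqrt E}\,d|V|\bigr)\mathcal F+2|f||u'|\,dr$ ``plus favourable atomic terms,'' and elsewhere you suggest the jump contributions are ``favourably signed or dominated by $h^{-2}\|u_c\|_{L^\infty}^2\|V\|^2$.'' Neither is the right justification. At an atom $x_j$ with $V(\{x_j\})=V_j$, $u'$ jumps by $h^{-2}u_c(x_j)V_j$, so
$\Delta F = 2\,\mathrm{Re}\bigl(\overline{(u')^L(x_j)}\,u_c(x_j)V_j\bigr)+h^{-2}|u_c(x_j)|^2V_j^2$,
which has a term \emph{quadratic} in $V_j$. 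A linear jump inequality $\Delta F\le\tfrac{|V_j|}{h\sqrt E}F^L$ fails whenever $|V_j|\gg hE^{1/2}$, so the stated Gr\"onwall does not close, and bounding the jump by the fixed quantity $h^{-2}\|u_c\|_{L^\infty}^2\|V\|^2$ would not produce a bound on $\sup F$ proportional to $F$ and hence not the factor $e^{C_1}$. What \emph{is} true is the multiplicative bound $F^R(x_j)\le e^{\,|V_j|/(E^{1/2}h)}F^L(x_j)$, and proving it is exactly equivalent (after completing the square in $|(u')^L(x_j)|$ and $|u_c(x_j)|$) to the elementary inequality $e^x-1-x\,e^{x/2}\ge0$ for $x\ge0$. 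That inequality is the crux of the paper's argument, reached there via the Gaussian approximation $V_{d,\eta}$ of $|V_d|$, integration against the weight, and the limit $\eta\to0^+$. Your proposal does not surface this inequality or the multiplicative (rather than additive) form of the jump estimate, and without it the Gr\"onwall step, and hence the control of the perturbation term (iii), does not go through. Identifying and using $e^x-1\ge x\,e^{x/2}$ to obtain the exponential jump bound, and then running an impulsive Gr\"onwall with those multiplicative jumps at the atoms of $|V_d|$ and absolutely continuous growth off them, is what would repair the argument.

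Two smaller points: (i) the remark that one could mollify $V$ itself and pass to the limit is not a free alternative --- changing $V$ changes the operator domain $\mathcal D$ and the solution $u$, so convergence of both sides of \eqref{meas Carleman est} under mollification is itself nontrivial; the paper sidesteps this by mollifying only the weight, not the operator. (ii) Your auxiliary identities $\ep\|u\|^2=|\mathrm{Im}\int f\overline u|$ and the expression for $h^2\|u'\|^2$ are used in the paper as well (\eqref{absorb last error}, \eqref{h u prime square 1}--\eqref{h u prime square 2}), so this part of your plan is on track, although the way you phrase it (``closes the system'') does not make clear that unweighted $\|u\|_{L^2}$ can only appear with an $\ep$ prefactor so as to remain absorbable when $\ep=0$.
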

\begin{remark}
In the case that $E > 0$ is fixed independent of $h$, and we restrict $h \in (0,1]$, the constant \eqref{C} is bounded from above by the more succinct expression
\begin{equation} \label{simplified up bd}
\exp (\tilde{C}(E,\delta)(1 + \|V\|)/h), \qquad \text{for some $\tilde{C}(E, \delta) > 0$ depending on $E$ and $\delta$.}
\end{equation}
\end{remark}

Two consequences of Theorem \ref{meas Carleman est thm} are the absence of positive eigenvalues of $P$, and a weighted limiting absorption resolvent bound.

\begin{corollary}
The operator $P$ on $L^2(\R)$, given by \eqref{apply P} and equipped with domain \eqref{D}, has no positive eigenvalues.
\end{corollary}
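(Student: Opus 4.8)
The plan is to derive the absence of positive eigenvalues directly from the Carleman estimate in Theorem~\ref{meas Carleman est thm}. Suppose, for contradiction, that $E > 0$ is an eigenvalue of $P$ with eigenfunction $u \in \mathcal{D}$, $u \not\equiv 0$, so that $Pu = Eu$, i.e.\ $(P - E)u = 0$. Since $u \in \mathcal{D} \subset L^2(\R)$, the weighted right-hand side $(|x|+1)^{1+\delta}|(P-E)u|^2$ vanishes identically, and in particular $(|x|+1)^{(1+\delta)/2}(P - E \pm i \ep)u \in L^2(\R)$ with $\ep = 0$, so Theorem~\ref{meas Carleman est thm} applies (with any fixed $\delta > 0$ and $\ep = 0$).

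Applying \eqref{meas Carleman est} with $\ep = 0$, the right side is zero, so
\begin{equation*}
\int_\R (|x|+1)^{-1-\delta}\big(E|u|^2 + |hu'|^2\big)\,dx \le 0.
\end{equation*}
Since the integrand is nonnegative and $E > 0$, $h > 0$, this forces $|u|^2 = 0$ a.e., hence $u = 0$ in $L^2(\R)$, contradicting that $u$ is an eigenfunction. Therefore $P$ has no positive eigenvalues.

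I expect there to be essentially no obstacle here; the only point requiring a line of care is confirming that the hypothesis ``$(|x|+1)^{(1+\delta)/2}(P - E \pm i\ep)u \in L^2(\R)$'' of the theorem is met, which is immediate because $(P-E)u = 0$ identically for an eigenfunction, so the weighted function is the zero function. One should also note that the constant $C(V,E,h,\delta)$ in \eqref{C} is finite for each fixed $h > 0$ and $E > 0$ (which it manifestly is), so that the estimate is not vacuous. No decay or regularity beyond membership in $\mathcal{D}$ is needed, since the contradiction comes purely from the sign of the left-hand side.
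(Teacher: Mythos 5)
Your proof is correct and is exactly the argument the paper intends (the paper leaves it implicit as an immediate consequence of Theorem \ref{meas Carleman est thm}): apply \eqref{meas Carleman est} with $\ep = 0$ to an eigenfunction, note the right side vanishes, and conclude $u = 0$. Your check that the weighted hypothesis is trivially satisfied for an eigenfunction is the right point of care.
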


\begin{corollary}\label{meas resolv est thm}
Fix  $\delta > 0$. Then for all $ E = E(h) > 0$ (which may depend on $h$), $\ep \in (0,1]$, and $h > 0$, 
\begin{equation} \label{exp est meas}
 \|(|x| + 1)^{-\frac{1+\delta}{2}} (P(h) - E \pm i\varepsilon)^{-1} (|x| + 1)^{-\frac{1+\delta}{2}}\|_{L^2(\mathbb R) \to L^2(\mathbb R)} \le \Big( \frac{C(V, E, h ,\delta)}{E} \Big)^{1/2} ,
\end{equation}
where $C(V,E,h, \delta)$ is as given in \eqref{C}. 
\end{corollary}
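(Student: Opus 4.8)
\emph{Proof strategy.} The plan is to deduce \eqref{exp est meas} directly from Theorem \ref{meas Carleman est thm}, exploiting that for $\ep > 0$ the operator $P - E \pm i\ep$ is boundedly invertible on $L^2(\R)$. Fix $\delta > 0$, $E = E(h) > 0$, $\ep \in (0,1]$, and $h > 0$, and write $w \defeq (|x|+1)^{-(1+\delta)/2}$, which acts as a bounded multiplication operator on $L^2(\R)$ with norm at most $1$. Since $P$ is self-adjoint on $\D$ (Section \ref{self adjointness section}) and $E \mp i\ep \notin \R$, the resolvent $(P - E \pm i\ep)^{-1}$ is a bounded operator from $L^2(\R)$ into the domain $\D$; this is where the restriction $\ep \in (0,1]$, in place of $\ep \in [0,1]$ from the theorem, is used.

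Next, given an arbitrary $f \in L^2(\R)$, I would set $g \defeq wf \in L^2(\R)$ and $u \defeq (P - E \pm i\ep)^{-1} g \in \D$, so that $(P - E \pm i\ep) u = g$. The key observation is that $u$ then satisfies the hypothesis of Theorem \ref{meas Carleman est thm}, since $(|x|+1)^{(1+\delta)/2}(P - E \pm i\ep)u = (|x|+1)^{(1+\delta)/2} w f = f \in L^2(\R)$. Applying \eqref{meas Carleman est} to $u$, discarding the nonnegative term $\int_\R (|x|+1)^{-1-\delta}|hu'|^2\,dx$ from the left-hand side, and using $(|x|+1)^{(1+\delta)/2} g = f$ on the right, one gets
\begin{equation*}
 E\,\|wu\|_{L^2(\R)}^2 = E \int_\R (|x|+1)^{-1-\delta}|u|^2\,dx \le C(V,E,h,\delta)\,\|f\|_{L^2(\R)}^2 .
\end{equation*}
Because $wu = w(P - E \pm i\ep)^{-1} wf$, dividing by $E$, taking square roots, and taking the supremum over $f \in L^2(\R)$ with $\|f\|_{L^2(\R)} = 1$ yields exactly \eqref{exp est meas}.

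I do not expect a genuine obstacle here: the argument is essentially a one-line consequence of Theorem \ref{meas Carleman est thm} together with the self-adjointness established earlier. The only points requiring a little care are (i) that $u = (P - E \pm i\ep)^{-1}g$ truly lies in $\D$, so that the Carleman estimate is applicable to it, and (ii) that the weighted hypothesis $(|x|+1)^{(1+\delta)/2}(P - E \pm i\ep)u \in L^2(\R)$ holds — which is automatic, since this function equals $f$.
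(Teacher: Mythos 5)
Your proposal is correct and is exactly the standard deduction that the paper leaves implicit: since $P$ is self-adjoint with domain $\D$, for $\ep>0$ the resolvent maps $L^2(\R)$ onto $\D$, so with $u=(P-E\pm i\ep)^{-1}wf$ the hypotheses of Theorem~\ref{meas Carleman est thm} are satisfied and the right-hand side of \eqref{meas Carleman est} collapses to $C(V,E,h,\delta)\|f\|^2_{L^2}$, from which \eqref{exp est meas} follows after dropping the $|hu'|^2$ term and dividing by $E$. This matches the intended argument in every respect.
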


\begin{remark}
It is well known that $V \in L^1(\R; \R)$ implies absence of positive eigenvalues. Furthermore, absence of positive eigenvalues was proved, by different methods, for locally $H^{-1}$ potentials with $L^1$-type decay, see \cite[Theorem 1.9]{lsw22}. This class includes finite signed measures as a special case. Recall also the celebrated von Neumann-Wigner potential $W$ \cite[Section XIII.13]{rs78}, which obeys
\begin{equation*}
W(x) = -\frac{8 \sin(2 |x|)} {|x|} + O(|x|^{-2}), \qquad \text{as $|x| \to \infty$,}
\end{equation*}
and has an eigenvalue at $E = 1$.
\end{remark}

\begin{remark}
For $E > 0$ fixed and $h \in (0,1]$, the right side of \eqref{exp est meas} is bounded from above by an expression of the form \eqref{simplified up bd}. In higher dimensions, resolvent upper bounds like \eqref{simplified up bd} are usually proved by first establishing a Carleman estimate, which is similar to \eqref{meas Carleman est} but involves an additional weight of the form $e^{\varphi/h}$, where $\varphi$ is a suitable phase function (see e.g., \cite[Theorem 2.2]{cv} and \cite[Lemma 2.2]{da}). However, our proof of Theorem \ref{meas Carleman est thm} in Section \ref{measure section} shows that in one dimension it is not necessary to use a phase. 
\end{remark}

When $V$ is compactly supported, we prove a simpler weighted estimate away from the support of $V$, which yields an improvement to \eqref{exp est meas} for $h$ small.

\begin{theorem}[exterior estimate] \label{ext est meas thm}
Fix $\delta > 0$ and $E > 0$. Suppose $V$ is supported in $[-R_0, R_0]$ (independent of $h$) for some $R_0 > 0$. Set $h_0 = 2^{-1} \delta^{-1} (1 + R_0)^{-\delta}$. There exist $C$, depending only on $R_0, E$, and $\delta$ (see \eqref{penult meas ext est} below), so that for all $\ep \in (0,1]$ and $h \in (0,h_0]$,
\begin{equation} \label{ext est meas}
\|(|x| + 1)^{-\frac{1+\delta}{2}} \mathbf{1}_{> R_0} (P - E \pm i\varepsilon)^{-1} \mathbf{1}_{> R_0} (|x| + 1)^{-\frac{1+\delta}{2}}\|_{L^2(\mathbb R) \to L^2(\mathbb R)} \le \frac{C}{h}.
\end{equation}
Here, $\mathbf{1}_{> R_0}$ denotes the characteristic function of $\{|x| > R_0\}$. 
\end{theorem}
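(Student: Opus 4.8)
The plan is to establish an exterior analogue of the Carleman estimate \eqref{meas Carleman est}, with every integral restricted to $\{|x|>R_0\}$ and with a constant that is only $O(h^{-2})$ rather than exponential in $1/h$, and then to deduce \eqref{ext est meas} in the same way that \eqref{exp est meas} is deduced from \eqref{meas Carleman est}.

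\textbf{Reduction.} Fix $f \in L^2(\R)$ and put $g \defeq \mathbf 1_{>R_0}(|x|+1)^{-\frac{1+\delta}{2}} f$; then $g$ is supported in $\{|x|>R_0\}$ and $\|(|x|+1)^{\frac{1+\delta}{2}} g\|_{L^2} = \|\mathbf 1_{>R_0} f\|_{L^2} \le \|f\|_{L^2}$. Let $u \defeq (P - E \pm i\ep)^{-1} g \in \D$, so that $v \defeq (P - E \pm i\ep)u = g$ is supported in $\{|x|>R_0\}$. Since the left side of \eqref{ext est meas} applied to $f$ equals $\|(|x|+1)^{-\frac{1+\delta}{2}}\mathbf 1_{>R_0} u\|_{L^2}$, it suffices to prove
\begin{equation} \label{exterior carleman plan}
\int_{|x|>R_0}(|x|+1)^{-1-\delta}\big(E|u|^2 + |hu'|^2\big)\,dx \;\le\; \frac{\tilde C}{h^2}\int_{|x|>R_0}(|x|+1)^{1+\delta}\,|v|^2\,dx
\end{equation}
for all $\ep\in(0,1]$ and $h \in (0,h_0]$, with $\tilde C = \tilde C(R_0,E,\delta)$; then \eqref{ext est meas} follows with $C = (\tilde C/E)^{1/2}$.

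\textbf{Reduction to the free operator on the two half-lines.} Because $\supp V \subseteq [-R_0,R_0]$, on each of $(R_0,\infty)$ and $(-\infty,-R_0)$ the function $u$ solves the \emph{free} semiclassical Helmholtz equation $-h^2 u'' - (E \mp i\ep)u = v$; since $u \in H^1$ and, by the equation, $u'' \in L^2$ on each half-line, $u$ and $hu'$ are continuous up to $x = \pm R_0$ and vanish at $\pm\infty$. Writing $y_\pm(r) \defeq u(\pm r)$ for $r>R_0$, both solve $-h^2 y_\pm'' = (E\mp i\ep)y_\pm + v(\pm\,\cdot)$ on $(R_0,\infty)$, which is the one-dimensional free instance of the situation treated in Theorem \ref{meas Carleman est thm}, with the ``origin'' shifted to $R_0$. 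One then runs the same spherical energy computation: a Morawetz-type multiplier with the bounded weight
\[
a(x) \defeq \delta^{-1}\big((1+R_0)^{-\delta} - (|x|+1)^{-\delta}\big), \qquad a'(x) = (|x|+1)^{-1-\delta}, \qquad 0 \le a \le \delta^{-1}(1+R_0)^{-\delta} = 2h_0,
\]
together with a zeroth-order multiplier to absorb lower-order terms and the imaginary-part identity to handle the $\pm i\ep$ contribution. As $V$ is absent on $\{|x|>R_0\}$, the quantity $\|V\|$ never enters; the constant that comes out has the same shape as the free version ($\|V\|=0$) of \eqref{C}, which for $E$ fixed is $\le \tilde C(R_0,E,\delta)\,h^{-2}$ throughout $(0,h_0]$. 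This gives \eqref{exterior carleman plan}.

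\textbf{The boundary terms at $\pm R_0$ --- the main obstacle.} The one feature not already present in the proof of Theorem \ref{meas Carleman est thm} is the pair of boundary terms generated at $x = \pm R_0$ when the multiplier identities are integrated outward from $R_0$. The weight $a$ is chosen to vanish at $|x| = R_0$ precisely so that the leading boundary contribution $\big[\,a\,(h^2|u'|^2 + E|u|^2)\,\big]_{|x|=R_0}$ drops; the remaining boundary term, of the form $h^2\,\imag\!\big[u'(\pm R_0)\,\overline{u(\pm R_0)}\big]$, is controlled by pairing the equation with $u$ on each half-line, which ties it to $\mp\ep\|u\|_{L^2(|x|>R_0)}^2$ and $\imag\langle v,u\rangle$ --- both of the sign (or size) needed. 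Arranging this bookkeeping so that only $v$ survives on the right, uniformly in $\ep\in(0,1]$, is the delicate point; the hypothesis $h \le h_0$, i.e.\ $2h \le \|a\|_{L^\infty(\R)}$, is exactly what allows the commutator and lower-order errors (which carry a factor $h$, resp.\ $h\,a''$) to be absorbed into the positive term $a'(h^2|u'|^2 + E|u|^2)$. Everything else is a direct simplification of the computation already carried out for Theorem \ref{meas Carleman est thm}, now for the free operator on the two half-lines.
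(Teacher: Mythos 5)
Your proposal is correct and follows essentially the same route as the paper: the paper uses the weight $w(x)=1-(1+R_0)^\delta(1+|x|)^{-\delta}$ for $|x|>R_0$ (a constant multiple of your $a$), extended by zero on $[-R_0,R_0]$, so that $wV=0$ and the energy identity integrates over all of $\R$ with no boundary terms, and the constraint $h\le h_0$ enters exactly where you place it, in absorbing the $h$-small error when the $\ep$-term is controlled by pairing the equation with $u$. The only cosmetic difference is that the paper's globally defined continuous weight makes the boundary contributions at $\pm R_0$ vanish identically rather than requiring the separate half-line bookkeeping you describe.
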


The $h$-dependencies in \eqref{exp est meas} and \eqref{ext est meas} are sharp in general, and were proved previously for $V \in L^1(\R ; \R)$  \cite{dash20}. Thus the novelty of this work is that Theorem \ref{meas Carleman est thm} implies optimal semiclassical resolvent bounds for potentials in one dimension which may not be absolutely continuous with respect to Lebesgue measure.

When $V$ is smooth, the exponential bound \eqref{exp est meas} (with different constants) was first proved by Burq \cite{bu98, bu02}, who also considered higher dimensions and more general operators. Further proofs and generalizations can be found in \cite{cv, da, rota15, ddh,sh19, ga19, gash22b, ob23}. In dimension $n > 1$, current results require at least $V \in L_{\text{loc}}^\infty(\R^n \setminus \{0\})$, with sufficient decay toward infinity, to obtain a semiclassical resolvent estimate. And often, only weaker versions of \eqref{exp est meas} are known \cite{klvo19, vo19, vo20a, vo20b, vo20c, vo21, sh20, vo22, gash22a, dgs23, sh24}, with $e^{C/h}$ replaced by $e^{C/h^\ell}$ for some $\ell >1$.

When $V$ is smooth, the improvement \eqref{ext est meas} away from the support of $V$ was first proved by Cardoso and Vodev \cite{cv}, refining earlier work of Burq \cite{bu02}, and again analogous results hold in many settings \cite{cv, da, rota15, ddh, sh19, gash22b, ob23}. When the dimension $n > 1$, Datchev and Jin \cite{daji20} showed the cutoff ${\bf 1}_{|x|>R_0}$ may need to be replaced by ${\bf 1}_{|x|>R}$ with $R \gg R_0$, even when $V \in C_0^\infty(\mathbb R^n)$.

To prove Theorem \ref{meas Carleman est thm}, we employ a positive commutator-style argument in the context of the so-called spherical energy method. This strategy has long been used to prove Carleman and related estimates \cite{cv, da, klvo19, dash20, gash22b, ob23}. In fact, as we work in one dimension, it suffices to use the pointwise energy
\begin{equation} \label{F}
\begin{gathered}
    F(x) = F_\pm[u](x) \defeq |hu'(x)|^2 + E|u(x)|^2, \\ 
      u \in \mathcal{D} \text{ such that } (|x| + 1)^{(1 + \delta)/2} (P(h)- E \pm i\varepsilon)u \in L^2(\R).
       \end{gathered}
\end{equation}

The goal is to construct a suitable weight $w(x)$ having locally bounded variation, so that, roughly speaking, the distributional derivative of $wF$ is bounded from above by a term involving \\ $2w \real ((P - E \pm i\ep)u \overline{u}')$, and bounded from below by $w(E|u|^2 + |hu'|^2)$ (see \eqref{lwr bd d w eta F} for the precise estimate). To attain the upper bound, $V$ needs to be reintroduced after differentiation of $wF$, at the cost of a perturbation term (see \eqref{meas eta dwF}, and note $V$ does not appear in $F$ in the first place because its distributional derivative may be irregular). This perturbation can be controlled, yielding the desired the lower bound, by designing $w$ appropriately. In particular, since $V$ may have discrete part $V_d$ (i.e., countably many point masses which are absolutely summable), we use a family of weights $w_\eta(x)$ depending on a parameter $\eta > 0$. Each $w_\eta$ controls a certain Gaussian approximation of $|V_d|$ (see \eqref{smoothed discrete part}). We then show that the needed estimates hold uniformly as $\eta \to 0^+$. 

When $V$ is compactly supported, it is well known that Corollary \ref{meas resolv est thm} is related to the distribution of scattering resonances for the operator $-\partial^2_x + V$. As in \cite{sz91}, we define the resonances of $-\partial^2_x + V$ as the poles of the cutoff resolvent
\begin{equation} \label{cutoff resolv}
    \chi (-\partial^2_x + V - \lambda^2)^{-1} \chi : L^2(\R) \to \mathcal{D}, \qquad \chi \in C^\infty_0(\R ; [0,1]), \chi \equiv 1 \text{ on } \supp V,
\end{equation}
which continues meromorphically from $\imag \lambda \gg 1$ to the complex plane. In Section \ref{unif resolv est section}, we combine \eqref{exp est meas} with a resolvent identity argument of Vodev \cite[Theorem 1.5]{vo14} to show
\begin{theorem} \label{unif resolv est thm}
Suppose $V$ is a finite signed Borel measure on $\R$, which is supported in $[-R_0, R_0]$ for some $R_0 >0$.  Fix  $\chi \in C_0^\infty(\mathbb R; [0,1])$ such that $\chi =1$ near $[-R_0, R_0]$, and fix $\lambda_0 > 0$. There exist $C, \ep_0 > 0$ so that for all $|\real \lambda| \ge \lambda_0$, and $|\imag \lambda| \le \ep_0$,
 \begin{equation} \label{unif resolv est}
      \|\chi (-\partial^2_x + V - \lambda^2)^{-1} \chi\|_{L^2 \to H^k} \le C|\real \lambda |^{k-1}, \qquad k = 0, \, 1 \,(H^0 \defeq L^2(\R)),
 \end{equation}
 and 
 \begin{equation} \label{unif resolv est D}
      \|\chi (-\partial^2_x + V - \lambda^2)^{-1} \chi\|_{L^2 \to \mathcal{D}} \le C(|\real \lambda | + 1),
 \end{equation}
 where $\mathcal{D}$ is equipped with the graph norm $\| u \|_{\mathcal{D}} \defeq (\|(-\partial^2_x + V)u\|^2_{L^2} + \|u\|^2_{L^2})^{1/2}$.
\end{theorem}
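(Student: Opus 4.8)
The plan is to transfer the semiclassical estimate \eqref{exp est meas} from Corollary \ref{meas resolv est thm} to the non-semiclassical (unit-$h$) setting, and then combine it with the abstract resolvent-extension result of Vodev \cite[Theorem 1.5]{vo14}. First, I would set up the change of variables relating $-\partial^2_x + V - \lambda^2$ to a semiclassical operator: write $\lambda = z/h$ with $|\real z|$ comparable to $1$ (say $\real z \in [\lambda_0^{-1}, \lambda_0]$ after rescaling, or more simply fix $h \sim 1/|\real\lambda|$), so that $-\partial^2_x + V - \lambda^2 = h^{-2}(-h^2\partial_x^2 + h^2 V - z^2)$. Here the rescaled potential $h^2 V$ is still a finite signed measure, with $\|h^2 V\| = h^2\|V\| \to 0$ as $|\real\lambda|\to\infty$; writing $z^2 = E \pm i\ep$ with $E = (\real z)^2 - (\imag z)^2 > 0$ bounded above and below and $\ep = 2|\real z \imag z| \lesssim h$ when $|\imag\lambda|\le\ep_0$, Corollary \ref{meas resolv est thm} applies and, crucially, the constant $C(h^2 V, E, h, \delta)/E$ in \eqref{exp est meas} stays \emph{bounded} as $h\to 0^+$ because $\|h^2 V\| = h^2\|V\|\to 0$ kills the exponential growth in \eqref{C}--\eqref{C1}. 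This yields the weighted bound $\|(|x|+1)^{-(1+\delta)/2}(-\partial_x^2 + V - \lambda^2)^{-1}(|x|+1)^{-(1+\delta)/2}\| \lesssim h^2 \cdot h^{-1} = h \sim |\real\lambda|^{-1}$ for $|\real\lambda|$ large and $|\imag\lambda|$ small, i.e.\ the $k=0$ case of \eqref{unif resolv est} with a polynomial weight in place of the cutoff $\chi$ (which is stronger, since $\chi$ is compactly supported).

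Next I would upgrade from the weighted estimate to the cutoff estimate and bootstrap to $k=1$ and to the domain norm. Replacing the polynomial weights by $\chi$ is immediate since $\chi \lesssim (|x|+1)^{-(1+\delta)/2}$ up to a constant on its support. For $k=1$: from the resolvent identity $(-\partial_x^2 + V - \lambda^2)^{-1} = \lambda^{-2}(V - \lambda^2)(-\partial_x^2+V-\lambda^2)^{-1} \cdot (\text{stuff})$ — more cleanly, writing $u = \chi(-\partial_x^2+V-\lambda^2)^{-1}\chi f$, one has $-\partial_x^2 u = \lambda^2 u - \chi V(\dots) + [\text{commutator terms with }\chi]$, and controlling $\|hu'\|$ via the $hu'$ term already present on the left side of \eqref{meas Carleman est} (which Corollary \ref{meas resolv est thm} secretly encodes through Theorem \ref{meas Carleman est thm}) gives the $H^1$ bound with the extra factor $|\real\lambda|$; alternatively one interpolates or integrates by parts directly. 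The estimate \eqref{unif resolv est D} into the graph norm of $\mathcal{D}$ then follows because $\|(-\partial_x^2+V)u\|_{L^2} \le \|(-\partial_x^2 + V - \lambda^2)u\|_{L^2} + |\lambda|^2\|u\|_{L^2}$ and the first term is just $\|\chi f\|\le\|f\|$ up to cutoff commutators while the second is controlled by the $k=0$ bound times $|\lambda|^2$, giving the stated $O(|\real\lambda|+1)$.

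Finally, I would invoke Vodev's identity argument \cite[Theorem 1.5]{vo14} to patch together the two regimes $|\real\lambda|\ge\lambda_0$ with $|\imag\lambda|\le\ep_0$: the semiclassical input above handles $|\real\lambda|$ large, while for $\lambda_0 \le |\real\lambda| \le \Lambda$ bounded and $\imag\lambda$ near $0$ one needs the absence of real resonances (equivalently, that $-\partial_x^2+V-\lambda^2$ has no real spectrum away from $0$, which follows from the no-positive-eigenvalues corollary together with limiting absorption), so the cutoff resolvent extends continuously across $(\lambda_0,\Lambda)$ and is bounded there by compactness. Vodev's resolvent identity lets one propagate the boundedness from a fixed-height contour down to $\imag\lambda = 0$ with quantitative control, producing $\ep_0$. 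The main obstacle I anticipate is the bookkeeping at the \emph{transition}: one must check that the constants from Corollary \ref{meas resolv est thm} genuinely stay bounded under the rescaling $V \mapsto h^2 V$ — this is the whole point, and it hinges on $\|V\|<\infty$ so that $\|h^2V\|\to 0$ — and that the cutoff commutator terms $[\partial_x^2,\chi]$, which are first-order and supported where $\chi$ is non-constant, are absorbed without destroying the $h$-power; these are standard but must be tracked carefully to land exactly on $|\real\lambda|^{k-1}$ rather than a worse power.
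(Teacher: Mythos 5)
Your proposal correctly identifies the paper's main mechanism: rescale to $h=|\real\lambda|^{-1}$ so that $V\mapsto h^2V$ with $\|h^2V\|=h^2\|V\|\to 0$, which neutralizes the exponential factors $e^{C_1}$ in \eqref{C}--\eqref{C1} and turns the exponential semiclassical bound into $O(h)=O(|\real\lambda|^{-1})$. Your bookkeeping $h^2\cdot h^{-1}=h$ matches the paper's calculation, and your integration-by-parts argument for the $k=1$ and $\mathcal{D}$ upgrades is the same in spirit as the paper's adaptation of \cite[Proposition~2.5]{bgt04}, including the cutoff-commutator accounting.

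However, your description of the role of Vodev's identity argument is off in a way that would cause trouble if you wrote the proof out. You present two regimes in $|\real\lambda|$ (``large'' and ``bounded'') and suggest that Vodev's identity ``patches'' them and that a compactness argument is needed for bounded $|\real\lambda|$. In fact Corollary~\ref{meas resolv est thm} already applies uniformly for \emph{all} $|\real\lambda|\ge\lambda_0$ (since $h=|\real\lambda|^{-1}\le\lambda_0^{-1}$ keeps the constants bounded), so no separate compactness step is needed; there is only one regime in $\real\lambda$. What Vodev's resolvent-identity argument \cite[Theorem~1.5]{vo14} actually does is different: it takes the uniform bounds obtained in the \emph{closed upper} half-plane (where the resolvent exists by self-adjointness and the limiting-absorption estimate) and propagates them across the real axis into a strip $-\ep_0<\imag\lambda<0$, ruling out poles there. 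In the paper this is carried out via the identity \eqref{key id for est transfer}, which expresses $\chi(H-\lambda^2)^{-1}\chi$ for $\lambda$ just below the real axis in terms of $\chi(H-\mu^2)^{-1}\chi$ for nearby real $\mu$, cutoff-commutator terms, and a difference of free resolvents controlled by \eqref{line int bd}; the norm of $\chi(H-\lambda^2)^{-1}\chi$ then appears with a small coefficient and is absorbed. Your proposal also implicitly relies on the meromorphic continuation of the cutoff resolvent existing in the first place, which requires the black-box framework of \cite{sz91} (checked in the paper before the proof) --- worth flagging since $V$ is merely a measure. None of these issues breaks the strategy, but as written the final paragraph would not compile into a correct argument for the resonance-free strip.
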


The existence of resonance free regions below the real axis is a long-studied problem: \cite{ha82, zw87, hi99} treat the cases $V \in L^\infty_{\text{comp}}(\R)$, $V \in L_{\text{comp}}^1(\R)$, and $V$ exponentially decaying, respectively. More recent articles \cite{sa16, dmw22, dama22} describe the distribution of resonances for thin barriers in the semiclassical regime. To the authors' knowledge, Theorem \ref{unif resolv est thm} is the first demonstration of a resonance free strip for a class of potentials in one dimension that can have singularly continuous part.

Estimates such as \eqref{unif resolv est} and \eqref{unif resolv est D} yield regularity and decay results for operators involving the measure $V$. As an illustration, in Section \ref{wave decay section} we show how \eqref{unif resolv est} implies an exponential local energy decay rate, modulo negative eigenvalues and a possible zero-resonance, for the associated wave equation, 
\begin{equation} \label{wave eqn}
\begin{cases}
(\partial_t^2 - \partial_x^2  + V(x))w(x,t) = 0, \qquad (x,t) \in \R \times (0, \infty), \\
w(x,0) = w_0(x),  \\
\partial_t w(x,0) = w_1(x), \\
\supp w_0, \, \supp w_1 \subseteq (-R,R), \qquad R > 0. 
\end{cases} 
\end{equation}
See Theorem \ref{LED thm} for a precise statement. Similar wave decay was previously established for \\$V \in L_{\text{comp}}^\infty(\R ; \R)$ (\cite{tz} and \cite[Theorem 2.9]{dz}). We also mention that exterior estimates like \eqref{ext est meas} have application to integrated wave decay \cite[Lemma 5]{daji20}. 



There is an extensive literature on second order operators whose coefficients are singular. Thus, we will not attempt to give a comprehensive review here. For the one dimensional case,  we point the reader to \cite{sash99, hrmy01,  amre05, hrmy12, ecte13, egnt13, ekmt14}, which develop the Sturm-Liouville theory for such operators, and investigate topics including boundary conditions, self-adjoint extensions, and inverse spectral theory. The research monograph \cite{smq} gives a comprehensive treatment of point interactions in three and fewer dimensions. Higher dimensional studies include \cite{beks92, gal19}.

\section{Review of BV} \label{BV review section}

To keep the notation concise, for the rest of the article, we use ``prime" notation to denote differentiation with respect to $x$, e.g., $u' \defeq \partial_x u$.

In Section \ref{BV review section}, we review the basics of functions of bounded variation (BV), and collect four well-known Propositions concerning their calculus. This material is relied upon frequently in later Sections. We give the proof of Proposition \ref{ftc bv prop}, while proofs of Propostions \ref{ibp bv prop}, \ref{prod rule bv prop}, and \ref{chain rule bv prop} may be found in \cite[Appendix B]{dash22}.

Let $f : \R \to \C$ be a function of locally bounded variation. For all $x \in \R$, put 
\begin{equation} \label{LRA}
 f^L(x) \defeq \lim_{\delta \to 0^+}f(x-\delta), \qquad  f^R(x) \defeq \lim_{\delta \to 0^+}f(x+\delta), \qquad f^A(x) \defeq (f^L(x) + f^R(x))/2,
\end{equation}
where the limits exist because both the real and imaginary parts of $f$ are a difference of two increasing functions. Recall that $f$ is differentiable Lebesgue almost everywhere, so $f (x)= f^L(x) = f^R(x) = f^A(x)$ for almost all $x \in \R$.

We may decompose $f$ as 
\begin{equation} \label{decompose f}
f = f_{r, +} - f_{r, -} + i( f_{i,+} - f_{i,-}),
\end{equation}
where the $f_{\sigma,\pm}$, $\sigma \in \{r, i\}$, are increasing functions on $\R$. Each $f^R_{\sigma,\pm}$ uniquely determines a regular Borel measure $\mu_{\sigma,\pm}$ on $\R$ satisfying $\mu_{\sigma, \pm}(x_1, x_2] = f^R_{\sigma, \pm}(x_2) -  f^R_{\sigma, \pm}(x_1)$, see \cite[Theorem 1.16]{fo}. We put
\begin{equation} \label{df}
    df \defeq \mu_{r, +} - \mu_{r, -} + i( \mu_{i,+} - \mu_{i,-}),
\end{equation}
 which is a complex measure when restricted to any bounded Borel subset. For any $a  < b$,
 
 \begin{equation} \label{ftc}
 \begin{gathered}
  \int_{(a,b]}df = f^R(b) - f^R(a),\\
  \int_{(a,b)}df = f^L(b) - f^R(a).
  \end{gathered}
 \end{equation}

\begin{proposition}[integration by parts] \label{ibp bv prop}
Let $f: \R \to \C$ have locally BV. For any $a < b$, and any continuous $\varphi$ with $\varphi'$ piecewise continuous, 
\begin{equation} \label{Folland IBP}
 \int_{(a,b]} \varphi df + \int_{(a,b]} \varphi' fdx = f^R(b)\varphi(b) - f^R(a)\varphi(a).
\end{equation}
\end{proposition}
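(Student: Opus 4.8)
The plan is to establish the integration by parts formula \eqref{Folland IBP} by reducing to the fundamental theorem of calculus for BV functions, namely \eqref{ftc}. The key observation is that the product $f\varphi$ is again locally BV (since $\varphi$ is continuous with piecewise continuous derivative, hence locally Lipschitz, hence locally BV, and products of locally BV functions are locally BV by Proposition \ref{prod rule bv prop}), so \eqref{ftc} applies to it:
\[
\int_{(a,b]} d(f\varphi) = (f\varphi)^R(b) - (f\varphi)^R(a) = f^R(b)\varphi(b) - f^R(a)\varphi(a),
\]
where the last equality uses the continuity of $\varphi$, so that $(f\varphi)^R = f^R \varphi$. Thus the entire content of the proposition is the Leibniz-type identity
\[
d(f\varphi) = \varphi\, df + f \varphi'\, dx
\]
as measures on $(a,b]$, which is precisely the statement of Proposition \ref{prod rule bv prop} (the product rule for BV calculus) in the special case where one factor $\varphi$ is absolutely continuous, so that $d\varphi = \varphi'\, dx$. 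Integrating this measure identity over $(a,b]$ and combining with the display above yields \eqref{Folland IBP}.

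The main subtlety to address is the choice of the left-continuous versus right-continuous representatives and which interval endpoints are half-open, since the measure $df$ can charge individual points. The cleanest route is: first verify the formula when $\varphi$ is smooth (e.g. $C^1$), where the product rule for measures is standard, and then approximate a general $\varphi$ with piecewise continuous derivative by splitting $(a,b]$ at the finitely many points where $\varphi'$ has jumps, applying the $C^1$ case on each subinterval, and summing — the telescoping of the boundary terms $f^R\varphi$ works because $\varphi$ itself is continuous at those break points. Alternatively, one simply cites Proposition \ref{prod rule bv prop} directly, since that proposition (proved in \cite[Appendix B]{dash22}) already covers products of locally BV functions in full generality, and absolutely continuous $\varphi$ is a special case.

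The step I expect to require the most care is bookkeeping the one-sided limits in the product rule: one must confirm that when forming $d(f\varphi)$, the ``value'' of $f$ that multiplies $d\varphi$ and the ``value'' of $\varphi$ that multiplies $df$ are consistent with the conventions in \eqref{df}–\eqref{ftc}. Because $\varphi$ is continuous, $\varphi^L = \varphi^R = \varphi$ everywhere, which collapses all the ambiguity on the $\varphi\, df$ term; and since $d\varphi = \varphi'\,dx$ is absolutely continuous (no atoms), the coefficient of $dx$ in the other term is unambiguously $f \varphi'$ Lebesgue-almost everywhere, where $f = f^A = f^L = f^R$ a.e. So in fact the continuity of $\varphi$ is exactly what makes all representative choices irrelevant, and no genuine obstacle remains — the proof is a short deduction from Propositions \ref{prod rule bv prop} and the identity \eqref{ftc} applied to $f\varphi$.
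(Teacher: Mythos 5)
The paper does not prove this proposition itself; it defers to \cite[Appendix B]{dash22}, so there is no in-text argument to compare against. Your deduction is correct and self-contained relative to the other tools the paper states: apply the product rule \eqref{e:prod} with $g=\varphi$, use $\varphi^A=\varphi$ (continuity) and $d\varphi=\varphi'\,dx$ ($\varphi$ is locally Lipschitz, hence absolutely continuous, since $\varphi'$ is piecewise continuous and therefore locally bounded), replace $f^A$ by $f$ Lebesgue-a.e.\ in the $f^A\varphi'\,dx$ term, and then integrate the resulting measure identity over $(a,b]$ using the elementary Lebesgue--Stieltjes identity \eqref{ftc} applied to $f\varphi$, with $(f\varphi)^R=f^R\varphi$ again by continuity of $\varphi$. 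You correctly avoid the potential circularity: \eqref{ftc} is the basic identity coming from the construction of $df$ in \eqref{df} (via \cite[Theorem 1.16]{fo}), not Proposition \ref{ftc bv prop}, whose proof in the paper in fact relies on \eqref{Folland IBP}. Two small remarks. First, the fact that $f\varphi$ is locally BV is not what Proposition \ref{prod rule bv prop} asserts (that proposition presupposes it); you should justify it by the standard observation that locally BV functions are locally bounded and the product of bounded BV functions on a compact interval is BV. Second, your route inverts the usual order of deduction: classically one proves integration by parts directly (e.g.\ by Fubini on $(a,b]^2$) and then derives the product rule from it, whereas you derive IBP from the product rule. Since the paper takes Proposition \ref{prod rule bv prop} as given from \cite{dash22}, this inversion is logically harmless here, but it does mean your proof is not independent of the deferred reference.
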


\begin{proposition}[funadmental theorem of calculus] \label{ftc bv prop}
Let $\mu_{\sigma, \pm}$, $\sigma \in \{r,\,i\}$ be positive Borel measures on $\R$ which are finite on all bounded Borel subsets of $\R$. Suppose $u \in  \mathcal{D}'(\R)$ has distributional derivative equal to $\mu = \mu_{r, +} - \mu_{r, -} + i( \mu_{i,+} - \mu_{i,-})$. (For example, this will hold for $u \in \mathcal{D}$, with $\mu = u_c V + gdx$ for some $g \in L^2(\R)$.) Then $u$ is a function of locally BV; For any $a \in \R$, $u$ differs by a constant from the right continuous, locally BV function
\begin{equation} \label{funct from meas}
f_\mu(x) \defeq \begin{cases}
\int_{[a, x]} d\mu & x \ge a, \\
-\int_{(x,a)} d\mu & x < a. 
\end{cases}
\end{equation}
\end{proposition}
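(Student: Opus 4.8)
The plan is to reduce the statement to the classical fact that a distribution on $\R$ with derivative equal to a (locally finite, signed) measure is represented by a function of locally bounded variation, which is exactly what $f_\mu$ in \eqref{funct from meas} provides. First I would observe that $f_\mu$ is well-defined: since each $\mu_{\sigma,\pm}$ is finite on bounded Borel sets, so is $\mu$ when restricted to any bounded interval, hence the integrals $\int_{[a,x]}d\mu$ and $\int_{(x,a)}d\mu$ are finite for every $x$. I would then check that $f_\mu$ is right continuous and of locally bounded variation directly from this representation: right continuity at $x \ge a$ follows from dominated convergence as $\delta \to 0^+$ applied to $\mathbf 1_{[a,x+\delta]} \to \mathbf 1_{[a,x]}$ against the finite measure $|\mu|$ on a bounded neighborhood (and similarly, with the appropriate one-sided care, at $x<a$ and at $x=a$); the local BV bound on an interval $[-N,N]$ is just $\le |\mu|([-N,N])<\infty$, since the increasing/decreasing pieces of $\real f_\mu$ and $\imag f_\mu$ are generated by $\mu_{r,\pm}$ and $\mu_{i,\pm}$ respectively.

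Next I would compute the distributional derivative of $f_\mu$ and show it equals $\mu$. Writing $\mu = df_\mu$ in the sense of \eqref{df} — which is legitimate because $f_\mu$ is right continuous and locally BV, and because, by \eqref{ftc} applied to $f_\mu$, one recovers $\mu(x_1,x_2] = f_\mu(x_2)-f_\mu(x_1)$ for $x_1<x_2$, so the Borel measure associated to $f_\mu$ by \cite[Theorem 1.16]{fo} is exactly $\mu$ — I would then invoke the integration-by-parts identity of Proposition \ref{ibp bv prop} (or, equivalently, Fubini's theorem applied to $\int\int$ over the triangle $\{a \le s \le x\}$) to verify that for every test function $\varphi \in C_0^\infty(\R)$,
\[
\int_\R f_\mu(x)\,\varphi'(x)\,dx = -\int_\R \varphi\,d\mu.
\]
Concretely, choosing $a$ so that $\supp\varphi \subseteq (a,\infty)$ (which we may, since $\varphi$ has compact support and $f_\mu$ differs only by a constant under a change of $a$), Proposition \ref{ibp bv prop} on $(a,b]$ with $b$ beyond $\supp\varphi$ gives $\int_{(a,b]}\varphi\,df_\mu + \int_{(a,b]}\varphi' f_\mu\,dx = f_\mu^R(b)\varphi(b) - f_\mu^R(a)\varphi(a) = 0$, and since $df_\mu = \mu$ this is precisely the displayed identity.

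Having shown $\partial_x f_\mu = \mu$ in $\mathcal D'(\R)$, I conclude: $u$ and $f_\mu$ are two distributions with the same derivative $\mu$, so $u - f_\mu$ has vanishing distributional derivative on $\R$, hence is a constant. Therefore $u$ equals $f_\mu$ plus a constant, and in particular $u$ is (a.e. equal to) a function of locally bounded variation, which is the assertion. For the parenthetical example, I would note that if $u \in \mathcal D$ then $u \in H^1(\R)$, so $u' \in L^2(\R) \subseteq L^1_{\loc}$, and $Pu = -h^2 u'' + u_c V \in L^2(\R)$ forces $u'' = h^{-2}(u_c V - Pu) =: h^{-2}(u_c V) - g$ with $g \in L^2(\R)$; writing the signed Borel measure $u_c V$ via its Jordan/real-imaginary decomposition into four positive parts (each finite on bounded sets, since $u_c$ is bounded and $|V|$ is finite) shows the hypothesis applies with $\mu = h^{-2}(u_c V - g\,dx)$, up to the harmless rescaling by $h^{-2}$ and writing $g\,dx$ as a difference of the positive measures $g_\pm\,dx$.

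The main obstacle I anticipate is purely bookkeeping rather than conceptual: one must be careful that the measure-to-function correspondence from \cite[Theorem 1.16]{fo} is applied with consistent normalizations (right-continuous representatives, half-open intervals $(x_1,x_2]$), so that $df_\mu$ as defined by \eqref{df} genuinely returns $\mu$ and not, say, $\mu$ with its atoms redistributed; and that the two-case definition of $f_\mu$ across $x=a$ glues to a single right-continuous function. Once the conventions are pinned down, the identification $\partial_x f_\mu = \mu$ and the "constant difference" conclusion are immediate.
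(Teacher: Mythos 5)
Your proposal is correct and follows essentially the same route as the paper's proof: verify that $f_\mu(x_2)-f_\mu(x_1)=\mu(x_1,x_2]$ so that $df_\mu=\mu$, apply the integration-by-parts identity \eqref{Folland IBP} to a test function to conclude $\partial_x f_\mu = \mu$ in $\mathcal{D}'(\R)$, and then use that $u-f_\mu$ has vanishing distributional derivative, hence is constant. The additional checks you flag (right continuity, local finiteness, the normalization in \cite[Theorem 1.16]{fo}, and the parenthetical example) are correct elaborations of steps the paper leaves implicit.
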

\begin{proof}
We need to show that the function \eqref{funct from meas} has distributional derivative $\mu$. First, it is straightforward to check that $f_\mu(x_2) - f_\mu(x_1) = \mu(x_1, x_2]$ for all $x_1 < x_2$. Hence $df_\mu = \mu$. Then \eqref{Folland IBP} implies
\begin{equation} \label{distributional identity}
-\int_{\R} \varphi' f_\mu dx = \int_{\R} \varphi d\mu, \qquad   \varphi \in C^\infty_0(\R),
\end{equation}
where all boundary terms vanish, and the right side of \eqref{distributional identity} is finite, due to the the compact support of $\varphi$. \\
\end{proof}

\begin{proposition}[product rule] \label{prod rule bv prop}
Let $f, \, g : \R \to \C$ be functions of locally bounded variation. Then
\begin{equation}\label{e:prod}
 d(fg) = f^A dg + g^A df
\end{equation}
as measures on a bounded Borel subset of $\R$.
\end{proposition}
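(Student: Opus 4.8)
The plan is to show that the two complex measures $d(fg)$ and $f^A\,dg+g^A\,df$ assign the same value to every half-open interval $(a,b]\subset\R$; since such intervals form a $\pi$-system generating the Borel $\sigma$-algebra and both measures are finite on bounded Borel sets, a standard uniqueness argument then forces them to agree on all bounded Borel subsets of $\R$. First I would record that $fg$ is again of locally BV (on a bounded interval $f$ and $g$ are bounded, and the total variation of $fg$ there is at most $\|f\|_\infty\,V(g)+\|g\|_\infty\,V(f)$), so $d(fg)$ is defined, and that $f^A\,dg$ and $g^A\,df$ are legitimate complex measures on bounded sets, since $f^A$ and $g^A$ are Borel measurable (being pointwise limits of the Borel functions $f$ and $g$) and locally bounded. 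By \eqref{ftc} and the fact that $(fg)^R=f^Rg^R$, one has $\int_{(a,b]}d(fg)=f^R(b)g^R(b)-f^R(a)g^R(a)$, so it remains to identify this quantity with $\int_{(a,b]}(f^A\,dg+g^A\,df)$. Using the decomposition \eqref{decompose f} together with the linearity of $f\mapsto df$, of $f\mapsto f^A$, and of the asserted identity separately in $f$ and in $g$, it suffices to treat the case where $f$ and $g$ are both real and nondecreasing, so that $df$ and $dg$ are positive finite Borel measures on $(a,b]$.

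The key step is a Fubini computation on the square $Q=(a,b]\times(a,b]$ for the product measure $df\otimes dg$. Evaluated directly, $(df\otimes dg)(Q)=(f^R(b)-f^R(a))(g^R(b)-g^R(a))$. Now I would decompose $Q=D\sqcup L\sqcup U$, where $D=\{(s,s):a<s\le b\}$ is the diagonal, $L=\{(s,t):a<s<t\le b\}$, and $U=\{(s,t):a<t<s\le b\}$. Integrating over $L$ in the $s$-variable first and using \eqref{ftc} to evaluate $df((a,t))=f^L(t)-f^R(a)$ gives $(df\otimes dg)(L)=\int_{(a,b]}(f^L-f^R(a))\,dg$; symmetrically, $(df\otimes dg)(U)=\int_{(a,b]}(g^L-g^R(a))\,df$; and the diagonal contributes $(df\otimes dg)(D)=\sum_{a<s\le b}(f^R(s)-f^L(s))(g^R(s)-g^L(s))$, an absolutely convergent sum of products of jumps, since $df(\{s\})=f^R(s)-f^L(s)$ and $dg(\{s\})=g^R(s)-g^L(s)$.

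Equating the two evaluations of $(df\otimes dg)(Q)$ and moving the boundary contributions $\int_{(a,b]}f^R(a)\,dg=f^R(a)(g^R(b)-g^R(a))$ and $\int_{(a,b]}g^R(a)\,df=g^R(a)(f^R(b)-f^R(a))$ to the other side, the mixed terms $f^R(a)g^R(b)$ and $f^R(b)g^R(a)$ cancel, leaving
\begin{equation*}
\begin{split}
f^R(b)g^R(b)-f^R(a)g^R(a)
&=\int_{(a,b]}f^L\,dg+\int_{(a,b]}g^L\,df\\
&\quad+\sum_{a<s\le b}(f^R(s)-f^L(s))(g^R(s)-g^L(s)).
\end{split}
\end{equation*}
To finish, I would observe that $f^R-f^L$ vanishes off the countable jump set of $f$ and equals the jump there, so $\int_{(a,b]}(f^R-f^L)\,dg=\sum_{a<s\le b}(f^R(s)-f^L(s))(g^R(s)-g^L(s))$, and likewise $\int_{(a,b]}(g^R-g^L)\,df$ equals the same sum; writing $f^A=f^L+\tfrac12(f^R-f^L)$ and $g^A=g^L+\tfrac12(g^R-g^L)$ then turns the right-hand side above into exactly $\int_{(a,b]}f^A\,dg+\int_{(a,b]}g^A\,df$, as required.

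I expect the main difficulty to be purely bookkeeping: carefully distinguishing open from half-open intervals when slicing $Q$, keeping track of the one-sided limits $f^L,f^R,g^L,g^R$ throughout, and pinning down the diagonal's contribution — it is precisely this jump–jump sum that makes the \emph{average} $f^A,g^A$ appear rather than $f^L$ or $f^R$. A secondary point is to ensure the statement is well-posed before proving it (measurability and local finiteness of $f^A\,dg$ and $g^A\,df$, and local BV of $fg$), and to keep the reduction to the monotone case in view so that Fubini is invoked only for positive finite measures.
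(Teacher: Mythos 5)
Your argument is correct: the reduction by bilinearity to real increasing $f,g$, the two evaluations of $(df\otimes dg)$ on the square $(a,b]\times(a,b]$ split into the diagonal and the two open triangles, the use of \eqref{ftc} to evaluate the slices $(a,t)$, and the identification of the diagonal contribution with the jump--jump sum (which is precisely what forces the averages $f^A,g^A$ rather than $f^L$ or $f^R$) all check out. The paper itself supplies no proof of this proposition, deferring to \cite[Appendix B]{dash22}, where essentially this same standard Stieltjes/Fubini argument is carried out, so your route matches the intended one.
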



\begin{proposition}[chain rule] \label{chain rule bv prop}
Let $f : \R \to \R$ be continuous and have locally bounded variation. Then, as measures on a bounded Borel set of $\R$,
\begin{equation} \label{chain rule continuous}
    d(e^f) = e^{f} df.
\end{equation}
\end{proposition}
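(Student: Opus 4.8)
The plan is to reduce the claim to the product rule (Proposition \ref{prod rule bv prop}) by expanding $e^f$ into its power series, and then to pass to the limit. First I would record that, since $f$ is continuous, $f^L = f^R = f$, hence $f^A = f$, and likewise $(f^n)^A = f^n$ for every $n \ge 0$; thus whenever the product rule is applied to powers of $f$, all the ``$A$''-averages drop out. An induction on $n$, using the product rule together with these observations and the inductive hypothesis, then yields
\begin{equation} \label{dfn}
 d(f^n) = n\, f^{n-1}\, df, \qquad n \ge 1,
\end{equation}
as measures on any bounded Borel set: indeed $d(f) = df$, and for $n \ge 2$ one has $d(f^n) = d(f\cdot f^{n-1}) = f^A\, d(f^{n-1}) + (f^{n-1})^A\, df = f\cdot(n-1)f^{n-2}\,df + f^{n-1}\,df = n f^{n-1}\,df$. (Each $f^n$ is continuous and locally BV, so these are legitimate instances of Proposition \ref{prod rule bv prop}.)

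Next I would fix a bounded Borel set $B$, choose a bounded open interval $I \supseteq B$, set $M \defeq \sup_I |f| < \infty$ (finite by continuity), and note $|df|(I) < \infty$ (since $f$ is locally BV). With $S_N \defeq \sum_{n=0}^N f^n/n!$ --- each continuous and locally BV --- one has $S_N \to e^f$ uniformly on $I$ (as $|f| \le M$ there) and, by \eqref{dfn} and linearity, $dS_N = S_{N-1}\, df$ on $I$. Testing against $\varphi \in C_0^\infty(I)$ via the integration-by-parts identity \eqref{Folland IBP} --- exactly as in the proof of Proposition \ref{ftc bv prop}, with the boundary terms vanishing --- gives $\int \varphi\, dS_N = -\int \varphi'\, S_N\, dx$. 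Letting $N \to \infty$, the left side tends to $\int \varphi\, e^f\, df$ because $\big|\int \varphi\,(S_{N-1}-e^f)\,df\big| \le \|\varphi\|_\infty\, |df|(I)\, \sup_I|S_{N-1}-e^f| \to 0$, while the right side tends to $-\int \varphi'\, e^f\, dx = \int \varphi\, d(e^f)$ by uniform convergence. This gives $d(e^f) = e^f\, df$ as distributions, hence as measures, on $I$, and therefore on $B$. (In particular $e^f$ is locally BV, which also follows from Proposition \ref{ftc bv prop}, since $e^f\, df$ is a difference of positive measures finite on bounded sets --- $e^f$ being positive, continuous, and bounded on bounded sets.)

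The routine ingredients are \eqref{dfn} and the elementary power-series bounds; the delicate point is the passage to the limit, where one must simultaneously exploit uniform convergence of $S_N$ (to recover the left-hand side $d(e^f)$ as the limit of $dS_N$) and total-variation control of $S_{N-1}\, df$ (to recover the right-hand side $e^f\, df$), both of which hinge on $f$ being bounded and $|df|$ finite on the bounded set in question --- precisely the hypothesis at hand. A self-contained alternative avoiding power series would be a Riemann--Stieltjes argument: telescope $e^{f(b)} - e^{f(a)}$ over a partition, apply the mean value theorem to $t \mapsto e^t$ on each subinterval, and use uniform continuity of $f$ to identify the limit of the resulting sums with $\int_{(a,b]} e^f\, df$ as the mesh shrinks to $0$; but the product-rule route is shorter since it recycles a proposition already established.
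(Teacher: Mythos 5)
The paper defers the proof of this proposition to \cite[Appendix~B]{dash22}, so there is no in-paper argument to compare against; I assess your proof on its own terms. Your argument is correct. The induction $d(f^n) = n f^{n-1}\,df$ is a legitimate use of Proposition~\ref{prod rule bv prop} together with continuity of $f$ (which kills the averaging superscripts), each $f^n$ is continuous and locally BV (on a bounded interval the variation of $f^n$ is at most $nM^{n-1}$ times that of $f$, where $M = \sup_I|f|$), and the limiting step is handled with the right controls, namely uniform convergence of $S_N$ and $S_{N-1}$ on $I$ and finiteness of $|df|(I)$. The one step worth making explicit is the identification $-\int \varphi'\,e^f\,dx = \int \varphi\, d(e^f)$, which presupposes that $e^f$ is locally BV so that $d(e^f)$ is defined and agrees with the distributional derivative; this is secured either by your parenthetical appeal to Proposition~\ref{ftc bv prop} (once the test-function identity exhibits the distributional derivative of $e^f$ as the measure $e^f df$), or directly from the fact that $t \mapsto e^t$ is Lipschitz on bounded sets, whence $e^f$ is continuous and locally BV and Proposition~\ref{ibp bv prop} gives the identification. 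The Riemann--Stieltjes alternative you sketch at the end (telescoping, mean value theorem, uniform continuity of $f$) is the more standard route for BV chain rules and would match the spirit of what a self-contained appendix typically does, but your power-series reduction has the virtue of recycling Proposition~\ref{prod rule bv prop} rather than redoing the Stieltjes estimates from scratch.
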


\section{Self-adjointness for $V$ a measure} \label{self adjointness section}

The goal of this section is to use the tools of Section \ref{BV review section} to show $(P, \mathcal{D})$ is self-adjoint on $L^2(\R)$, where $\mathcal{D}$ is given by \eqref{D}. This strategy sets the stage for several steps in the proof of Theorem \ref{meas Carleman est thm} in Section \ref{measure section}. We demonstrate that $(P, \mathcal{D})$ is merely the self-adjoint operator naturally associated to the quadratic form
\begin{equation} \label{quad form}
 q(u,v) \defeq \int h^2 \overline{u}' v'dx + \int \overline{u_c} v_c V, \qquad u, \, v \in H^1(\R). 
\end{equation}
As mentioned in Section \ref{introduction section}, self-adjointness was addressed in greater generality elsewhere \cite{hrmy01,hrmy12, egnt13}.
 \begin{lemma} \label{self adjointness lemma}
Let $V = V(x;h)$ be a real, finite signed Borel measure on $\R$. Then $\mathcal{D}$ specified by \eqref{D} is dense in $L^2(\R)$. The operator $P : L^2(\R) \to L^2(\R)$ given by \eqref{apply P} with domain $\mathcal{D}$ is self-adjoint.
\end{lemma}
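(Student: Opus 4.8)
The plan is to realize $(P,\mathcal{D})$ as the self-adjoint operator associated, via the KLMN theorem, to the quadratic form $q$ of \eqref{quad form}, and then to check that the form operator's domain is exactly $\mathcal{D}$. First I would verify that $q$ is closed and bounded below on $H^1(\R)$. Write $q=q_0+q_V$, where $q_0(u,v)=h^2\int\overline{u}'v'\,dx$ is the nonnegative closed form of $-h^2\partial_x^2$ with form domain $H^1(\R)$, and $q_V(u,v)=\int\overline{u_c}v_c\,V$, which is symmetric because $V$ is real. The one-dimensional Sobolev bound $\|u_c\|_\infty^2\le 2\|u\|_{L^2}\|u'\|_{L^2}$ gives, for every $\alpha>0$, $|q_V(u,u)|\le\|V\|\,\|u_c\|_\infty^2\le\alpha\,q_0(u,u)+\alpha^{-1}h^{-2}\|V\|^2\|u\|_{L^2}^2$, so $q_V$ is $q_0$-form bounded with relative bound $0$. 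Hence $q$ is closed and bounded below, and the associated self-adjoint operator $P_q$ has dense domain $\mathrm{dom}(P_q)$; in particular $\mathcal{D}$, once shown to equal $\mathrm{dom}(P_q)$, will be dense.

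Next I would show $\mathcal{D}\subseteq\mathrm{dom}(P_q)$ with $P_q$ restricting to $P$ there. For $u\in\mathcal{D}$, the definition \eqref{apply P} gives $u''=h^{-2}\big(u_cV-(Pu)\,dx\big)$ as a locally finite complex measure, so $u'$ is locally BV (Proposition \ref{ftc bv prop}) and, by hypothesis, $u'\in L^\infty(\R)$. For $v\in H^1(\R)$ the representative $v_c$ is continuous, locally BV, and vanishes at $\pm\infty$; applying the product rule of Proposition \ref{prod rule bv prop} to $\overline{u}'v_c$ and integrating via \eqref{ftc}, the boundary terms vanish and $\int h^2\overline{u}'v'\,dx=-h^2\int v_c\,d\overline{u}'=-\int v_c\overline{u_c}\,V+\int v_c\overline{(Pu)}\,dx$. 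Adding $\int\overline{u_c}v_c\,V$ cancels the measure term, so $q(u,v)=\int v_c\overline{(Pu)}\,dx=\langle Pu,v\rangle$; thus $u\in\mathrm{dom}(P_q)$ and $P_qu=Pu$.

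For the reverse inclusion, let $u\in\mathrm{dom}(P_q)$ and $f:=P_qu\in L^2(\R)$. Testing $q(u,v)=\langle f,v\rangle$ against $v\in C_0^\infty(\R)$ gives $-h^2u''+u_cV=f$ in $\mathcal{D}'(\R)$, so $Pu=f\in L^2(\R)$, and the only point left is $u'\in L^\infty(\R)$. From $u''=h^{-2}(u_cV-f\,dx)$ and Proposition \ref{ftc bv prop}, the right-continuous representative of $u'$ equals a constant plus $h^{-2}\big((u_cV)(0,\cdot\,]-g\big)$, where $g(x):=\int_0^x f$. Since $|u_cV|(\R)\le\|u_c\|_\infty\|V\|<\infty$, the function $g$ differs from $-h^2u'$ by a bounded quantity, so $u'\in L^2(\R)$ gives $\sup_{n\in\mathbb{Z}}\|g\|_{L^2(n,n+1)}<\infty$; combined with $g'=f\in L^2(\R)$ this bounds $\sup_n\|g\|_{H^1(n,n+1)}$, and the translation-uniform embedding $H^1(n,n+1)\hookrightarrow L^\infty(n,n+1)$ gives $g\in L^\infty(\R)$, hence $u'\in L^\infty(\R)$ and $u\in\mathcal{D}$. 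Therefore $\mathcal{D}=\mathrm{dom}(P_q)$, $(P,\mathcal{D})$ is self-adjoint, and $\mathcal{D}$ is dense in $L^2(\R)$.

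The main obstacle is handling the mere-measure nature of $V$ at the two places it bites. In the second paragraph the integration by parts must be run through the BV calculus of Section \ref{BV review section}, since $u''$ may carry point masses and a singular continuous part inherited from $V$, so the formal identity $\int h^2\overline{u}'v'\,dx=-\int h^2\overline{u}''v\,dx$ is not literally available. And in the third paragraph the automatic $L^\infty$ bound on $u'$ is the only non-formal step in identifying $\mathrm{dom}(P_q)$: it genuinely needs $u'\in L^2(\R)$ combined with a uniform-in-$n$ local Sobolev estimate, since $f\in L^2(\R)$ need not lie in $L^1(\R)$, so the global antiderivative $g$ cannot be bounded by a naive estimate.
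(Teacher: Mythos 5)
Your proof is correct and is organized around the same quadratic form $q$ of \eqref{quad form} as the paper, but the route from the form to the self-adjoint realization is handled differently, and the decisive technical step is genuinely different. The paper first proves that the a priori larger set $\mathcal{D}_{\max} = \{u \in L^2 : u' \in L^1_{\loc}, \ Pu \in L^2\}$ already equals $\mathcal{D}$; this is done by deriving a coupled system of quadratic inequalities in $\int_{(-a,a)}|u'|^2$, $\sup_{(-a,a)}|u|$, and $\sup_{(-a,a)}|u'|$, each controlled by $\|u\|_{L^2}$, $\|Pu\|_{L^2}$, and $\|V\|$. Then the paper shows $P$ is symmetric on $\mathcal{D}$, invokes Friedrichs to produce $(A,\mathcal D_1)$, observes $\mathcal D_1 \subseteq \mathcal D_{\max} = \mathcal D$, and concludes by the sandwich $P^* \subseteq A^* = A \subseteq P \subseteq P^*$. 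You instead prove the exact identity $\mathcal{D} = \mathrm{dom}(P_q)$ directly: the inclusion $\mathcal{D} \subseteq \mathrm{dom}(P_q)$ follows from the BV product rule and boundary terms vanishing (here you are implicitly rederiving the integration-by-parts identity \eqref{Folland IBP} in a form that tolerates $v' \in L^2_{\loc}$ rather than piecewise continuous, which is fine since Proposition \ref{prod rule bv prop} plus \eqref{ftc} suffices); and the reverse inclusion is obtained by the short observation that $g(x) = \int_0^x f$ differs from $-h^2u'$ by a globally bounded function, so $\sup_n \|g\|_{H^1(n,n+1)} < \infty$ and the translation-uniform Sobolev embedding gives $g, u' \in L^\infty$. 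Your $L^\infty$ argument is leaner than the paper's inequality system, but it crucially uses $u' \in L^2$, which you get for free because $\mathrm{dom}(P_q) \subseteq H^1(\R)$; the paper's longer argument proves the somewhat stronger statement that $u' \in L^1_{\loc}$, $u \in L^2$, $Pu \in L^2$ already force $u' \in L^2 \cap L^\infty$, i.e., that the $u' \in L^\infty$ clause in the definition \eqref{D} of $\mathcal{D}$ is not a genuine restriction. Both approaches are sound; yours buys brevity and a cleaner domain identification, while the paper's buys the extra characterization $\mathcal{D}_{\max} = \mathcal D$.
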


\begin{proof}
Throughout the proof, we work with $u \in L^2(\R)$ that have locally integrable distributional derivative $u'$. Such $u$ have a (unique, locally absolutely) continuous representative $u_c$, hence we can define the product $u_cV$ as a distribution on $\R$, since it is a complex measure when restricted to bounded Borel subsets of $\R$.

Let $\mathcal D_\textrm{max} \supseteq \mathcal{D}$ be the set of all $u \in L^2(\R)$ such that $u' \in L^1_\textrm{loc}(\R)$ and $Pu \defeq -h^2 u'' + u_c V \in L^2(\R)$. By Proposition \ref{ftc bv prop}, for any $u \in \mathcal D_{\textrm{max}}$, we may fix a representative $u_{\text{bv}}'$ for $u'$ that has locally bounded variation. If necessary, we redefine $u_{\text{bv}}'$ on a set of Lebesgue measure zero so that $u_{\text{bv}}'(x) = (u_{\text{bv}}')^A(x)$ for all $x \in \R$ (this updated $u'_{\text{bv}}$ still has locally BV). 

In the computations to follow, we always work with the representatives $u_c$ and $u_{\text{bv}}'$, but we drop subscripts to keep notation concise. This convention ensures that expressions like $u'V$ are well defined as complex Borel measures (on bounded Borel subsets), since locally BV functions are Borel measurable. It also simplifies some calculations that involve \eqref{ftc} or \eqref{e:prod}. 

Our first step is to prove $\mathcal D_\textrm{max} \subseteq \mathcal D$. Since the reverse containment is trivial, we will conclude $\mathcal D_\textrm{max} = \mathcal D$. Indeed, for $u \in \mathcal D_\textrm{max}$ and any $a>0$,
\begin{equation} \label{bound L2 u prime}
\begin{split}
\int_{(-a,a)} |u'|^2dx &= \int_{(-a, a)} u' d(\overline{u}) \\
& = \int_{(-a,a)} d(u' \overline{u}) - \overline{u} d(u') \\
&=  (u'\bar u)(a) - (u'\bar u)(-a)  + h^{-2} \int_{(-a, a)}  \overline{u} Pu dx - h^{-2} \int_{(-a,a)} \overline{u} uV \\
& \le 2\sup_{(-a,a)}|u'|\sup_{(-a,a)} |u| + h^{-2}\|V\| \sup_{(-a,a)}|u|^2 + h^{-2}\|Pu\|_{L^2}\|u\|_{L^2},
\end{split}
\end{equation}
where $\|V\| \defeq |V|(\R)$, with $|V|$ the total variation of $V$. The second line of \eqref{bound L2 u prime} follows from \eqref{e:prod} and $u = u^A$, $u' = (u')^A$; the third line follows from the fact that $-h^2 d(u') = Pu - uV$ as Borel measures, which is a consequence of \eqref{distributional identity}.

Since $u$ is locally absolutely continuous,
\begin{equation}
\sup_{(-a,a)}|u|^2 =\sup_{x \in (-a,a)}  \Big(|u(0)|^2 + 2 \real \int_0^x u' \overline{u}  dx \Big)  \le |u(0)|^2 + 2\Big(\int_{(-a,a)}|u'|^2 dx\Big)^{1/2}\|u\|_{L^2}.
\end{equation}
Furthermore, if $x \in (0, a)$, then by \eqref{ftc}, \eqref{e:prod} and $u' = (u')^A$,
\begin{equation*}
\begin{split}
(u' &\overline{u}')(x) \\
&= (u' \overline{u}')(0) + \int_{(0,x]} d (u' \overline{u}' )  \\
&= |u'|^2(0) - 2 h^{-2} \real \Big( \int_{(0, x]} \overline{u}' Pud x -  \int_{(0, x]} \overline{u}'  uV \Big) \\
&\le |u'|^2(0) + 2 h^{-2}\|V\|\sup_{(-a,a)}|u|  \sup_{(-a,a)}|u'| +  2h^{-2}\|Pu\|_{L^2}\Big(\int_{(-a, a)} |u'|^2 dx \Big)^{1/2},
\end{split}
\end{equation*}
while if $x \in (-a,0)$, we similarly find
\begin{equation*}
\begin{split}
 (u'& \overline{u}')(x)\\
 &= (u' \overline{u}')(0) -
\int_{(x,0]} d (u' \overline{u}' ) \\
&\le |u'|^2(0) + 2 h^{-2}\|V\|\sup_{(-a,a)}|u|  \sup_{(-a,a)}|u'| +  2h^{-2}\|Pu\|_{L^2}\Big(\int_{(-a, a)} |u'|^2 dx \Big)^{1/2}.
\end{split}
\end{equation*}


We thus arrive at a system of inequalities of the form $x^2 \le 2yz+Ay^2 + B$, $y^2 \le C + Dx$, $z^2 \le E + Fyz + Gx$, where $x \defeq (\int_{(-a,a)}|u'|^2 dx)^{1/2}$, $y \defeq \sup_{(-a,a)}|u|$, and $z \defeq \sup_{(-a,a)}|u'|$. After using the second inequality to eliminate $y$, we obtain a system in $x$ and $z$ with quadratic left hand sides and subquadratic right hand sides. Hence $x$, $y$, and $z$ are each bounded in terms of $A, B, \dots, G$. Letting $a \to \infty$, we conclude that $u' \in L^2(\R)$ and $u, \, u' \in L^\infty(\R)$. Hence  $\mathcal D_\textrm{max} \subseteq \mathcal D$ as desired.

Next, we equip $P$ with the domain $\mathcal D_\textrm{max} = \mathcal D$, and show that $P$ is symmetric. Let $u, v \in \mathcal{D}$, and take $\{\varphi_k\}_{k=1}^\infty \subseteq C^\infty_0(\R)$ converging to $v$ in $H^1(\R)$. Using the distributional definition of $Pu$, 
\begin{equation} \label{prod to form}
\begin{split}
\langle Pu, v \rangle_{L^2} &= \lim_{k \to \infty} \langle Pu, \varphi_k \rangle_{L^2} \\
&=\lim_{k \to \infty} \int \overline{u} (-h^2 \varphi_k'')dx + \int  \overline{u} \varphi_k V\\
&=\lim_{k \to \infty} \int h^2\overline{u}' \varphi_k' dx + \int  \overline{u} \varphi_k V\\
&=  \int h^2 \overline{u}' v'dx + \int \overline{u} v V,
\end{split}
\end{equation}
where the last equal sign follows since $|V|$ is finite and $\|w\|^2_{L^\infty} \le \|w\|_{L^2}\|w'\|_{L^2}$ for any $w \in H^1(\R)$. Approximating $u \in H^1(\R)$ by $C^\infty_0(\R)$-functions, we similarly have $\langle u, Pv \rangle_{L^2} =  \int h^2 \overline{u}' v'dx + \int  \overline{u} v V$. Thus $P$ is symmetric. 

The last step is to establish that $(P, \mathcal{D})$ is densely defined and $P^* \subseteq P$. For this, define on $H^1(\R)$ the quadratic form \eqref{quad form}. Since, for any $\gamma > 0$,
\begin{equation} \label{bdd below}
\begin{split}
 \left| \int V |u|^2\right| &\le \|V\| \|u\|_{L^\infty}^2 \\
&\le \|V\| \|u\|_{L^2} \|u'\|_{L^2} \\
& \le \|V\| \left( \tfrac{1}{2\gamma} \| u\|^2_{L^2} + \tfrac{\gamma}{2} \| u'\|^2_{L^2}   \right),
\end{split}
\end{equation}
setting $\gamma = h^2/\| V\|$ yields
\begin{equation} \label{norm equiv}
- \tfrac{\| V\|^2}{2h^2} \|u\|^2_{L^2} + \tfrac{h^2}{2} \|u'\|^2_{L^2}   \le q(u,u) \le \tfrac{ \| V\|^2}{2h^2} \|u\|^2_{L^2}  + \tfrac{3h^2}{2} \|u'\|^2_{L^2}.
\end{equation}
 We thus conclude $q$ is semibounded and closed. 
 

By Friedrichs' result \cite[Theorem 2.14]{te}, there is a unique (densely defined) self-adjoint operator $(A, \mathcal{D}_1)$ with 
\begin{equation*}
\begin{gathered}
\mathcal{D}_1 = \{ u \in H^1(\R) : \text{there exists $\tilde{u} \in L^2$ with $q(u,v) = \langle \tilde{u}, v \rangle_{L^2}$ for all $v \in H^1(\R)$} \}, \\
A u  = \tilde{u}.  
\end{gathered}
\end{equation*}
Revisiting the calculation \eqref{prod to form}, we see that for any $u \in \mathcal{D}_1$, $\tilde{u} = -h^2 u'' + uV$ in the distributional sense. Thus $(A, \mathcal{D}_1) \subseteq (P, \mathcal{D}_{\max})$, so $P^* \subseteq A^* = A \subseteq P$. Since we already showed $P \subseteq P^*$ (symmetricity), we conclude $P^* = P$ as desired. \\
\end{proof}

\section{Weighted estimate} \label{measure section}

The purpose of this Section is to prove Theorem \ref{meas Carleman est thm}. As discussed in Section \ref{introduction section}, we do so by means of a positive commutator argument that leverages the energy method.

\begin{proof}[Proof of Theorem \ref{meas Carleman est thm}]
Our starting point is the pointwise energy $F$ given by \eqref{F}. As in the proof of Lemma \ref{self adjointness lemma}, we fix with a continuous representative of $u \in \mathcal{D}$, and fix a representative of $u' \in L^2(\R) \cap L^\infty(\R)$ that has locally bounded variation and $(u')^A = u'$ (thus $F^A = F$). 

Since the measure $V = V(x;h)$ is finite, it can have only countably many point masses $\{x_j\}_j$, and moreover $\sum_{j} |V_j| < \infty$, where $V_j \defeq V(\{x_j\})$. Let us decompose
\begin{equation*}
V = V_c + V_d, \qquad V_d = \sum_j V_j \delta_{x_j},
\end{equation*}
into its discrete and continuous parts. Here, $\delta_{x_j}$ denotes the dirac measure concentrated at $x_j$. A key technical feature of the ensuing calculations is our use of the weight function 
\begin{equation} \label{w}
w = w_\eta(x) \defeq \exp\Big(\int_{-\infty}^x \big[ \frac{1}{E^{1/2}h} |V_c| + (\frac{1}{E^{1/2}h}V_{d, \eta}(x') + (|x'| + 1)^{-1-\delta}) \big]dx' \Big), \qquad \eta > 0,
\end{equation}
where $|V_c|$ denotes the total variation of $V_c$, and
\begin{equation} \label{smoothed discrete part}
V_{d, \eta}(x) \defeq \pi^{-1/2}  \eta^{-1} \sum_j |V_j| e^{-((x-x_j)/\eta)^2}.
\end{equation}
The exponent of $w$ is a continuous function, thus we may compute $dw$ using \eqref{chain rule continuous}. Nearing the end of the argument, we manage to control a term involving $|V_d|$ by sending $\eta \to 0^+$, essentially using that $\pi^{-1/2} \eta^{-1}  \sum_j |V_j|  e^{-((x-x_j)/\eta)^2} \to \sum_j |V_j| \delta_{x_j}$ in the distribution sense. We note also that

 \begin{equation}  \label{sup w}
\sup_{\R} |w_\eta(x)| \le e^{C_1(V,E,h,\delta)}, 
\end{equation}
with $C_1(V, E, h, \delta)$ given by \eqref{C1}.

From \eqref{e:prod} and $u^A= u$, $(u')^A = u'$, we find, in the sense of measures on $\R$,  
\begin{equation*} 
\begin{split}
    dF &= 2 h^2 \real (\overline{u}' d(u')) + 2E \real \left( u \overline{u}' \right) \\
    &= -2 \real (((P - E \pm i\ep)u)\overline{u}') \mp 2\ep \imag \left(u \overline{u}'\right)  + 2\real(u\overline{u}'V),
    \end{split}
\end{equation*}
where, to get the second line, we used that $-h^2 d(u') = Pu - uV$ as Borel measures. Using \eqref{e:prod} again, this time to expand $d(wF)$,
 \begin{equation} \label{meas eta dwF} 
\begin{split}
    d(w F) &= Fdw + w dF \\
    &= |hu'|^2 dw +  E |u|^2dw \\
   &-2  w  \real(((P - E \pm i\ep)u)\overline{u}')  \mp 2\ep w \imag \left(u \overline{u}'\right) + 2\real w (u\overline{u}'V) \\
    &\ge  -2  w  \real(((P - E \pm i\ep)u)\overline{u}')  \mp  2\ep w \imag \left(u \overline{u}'\right) \\
    &+  (|hu'|^2 + E|u|^2)dw - h^{-1}w (E^{1/2} |u|^2 + E^{-1/2}|hu'|^2)(|V_c| +  \sum_{j} |V_j| \delta_{x_j}).
    \end{split}
\end{equation}

By \eqref{chain rule continuous} and \eqref{w},
\begin{equation} \label{d w eta}
dw_\eta = h^{-1}w_\eta ( E^{-1/2} |V_c| + E^{-1/2} \eta^{-1} \pi^{-1/2} \sum_j |V_j| e^{-((x-x_j)/\eta)^2}dx + h(|x| + 1)^{-1 -\delta}dx).
\end{equation}
Plugging \eqref{d w eta} into the fifth line of \eqref{meas eta dwF} implies,
\begin{equation} \label{lwr bd d w eta F}
\begin{split}
 d(w F)&\ge -2  w  \real(((P - E \pm i\ep)u)\overline{u}')  \mp  2\ep w \imag \left(u \overline{u}'\right)  \\
  &+  w (|x| +1)^{-1 -\delta} (E|u|^2  +|hu'|^2)dx \\
 & + \sum_{j} h^{-1}  w(E^{1/2} |u|^2 + E^{-1/2}|hu'|^2)  |V_j|( \eta^{-1} \pi^{-1/2} e^{-((x-x_j)/\eta)^2}dx - \delta_{x_j})
 \end{split}
\end{equation}
Next, we note there exist sequences $\{a^\pm_n\}_{n=1}^\infty$ tending to $\pm \infty$, along which $F(a^\pm_n) = F^R(a^\pm_n) = F^L(a^\pm_n) \to 0$. This is because $F(x) \in L^1(\R)$ and is continuous off of a countable set. So, we integrate both sides of \eqref{lwr bd d w eta F} over $(a^-_n, a^+_n]$ and send $n \to \infty$. By \eqref{ftc}, the left side of \eqref{lwr bd d w eta F} becomes zero. Hence, from \eqref{sup w} and $w \ge 1$,
\begin{equation} \label{meas pre penult est}
\begin{split}
\int & (|x| + 1)^{-1-\delta}(E|u|^2 + |hu'|^2)dx \\
&+ \sum_j h^{-1}  |V_j|  \int  w (E^{1/2} |u|^2 + E^{-1/2}|hu'|^2) (\eta^{-1} \pi^{-1/2} e^{-((x-x_j)/\eta)^2}dx - \delta_{x_j}) \\
 &\le  e^{C_1(V,E,h,\delta)} \big( \int   \frac{1}{\gamma h^2} (|x| + 1)^{1 + \delta}|(P- E \pm i\ep) u|^2 + \gamma (|x| + 1)^{-1-\delta}|hu'|^2 \\
&+ 2\varepsilon \int |uu'| dx \big) , \qquad \gamma, \, h >0. \end{split}
\end{equation}

The goal of the following calculations is to show that the second line of \eqref{meas pre penult est} is nonnegative in the limit as $\eta \to 0^+$. First notice that as $\eta \to 0^+$,
\begin{equation*}
\begin{gathered}
\int_{-\infty}^{x_j} V_{d, \eta}(x') dx'= \pi^{-1/2} \sum_{\ell} |V_\ell| \int_{-\infty}^{(x_j - x_\ell)/\eta} e^{-(x')^2} dx' \to \frac{1}{2}|V_j| +  \sum_{x_\ell < x_j} |V_\ell|, \\
 \int_{-\infty}^{x_j + \eta x} V_{d, \eta}(x') dx'= \pi^{-1/2} \sum_{\ell}  |V_\ell|  \int_{-\infty}^{\frac{x_j - x_\ell}{\eta} + x} e^{-(x')^2} dx' \to \pi^{-1/2} |V_j| \int_{- \infty}^x  e^{-(x')^2} dx' +  \sum_{x_\ell < x_j} |V_\ell|. 
\end{gathered}
\end{equation*}
This implies 
\begin{equation*}
\begin{gathered}
w_\eta(x_j) \to e^{\Gamma(E, h ,j)} \exp\Big( \frac{|V_j|}{2E^{1/2}h} \Big) \\
w_\eta(x_j + \eta x) \to  e^{\Gamma(E, h ,j)}  \exp\Big( \frac{|V_j|}{(\pi E)^{1/2}h} \int_{- \infty}^x  e^{-(x')^2} dx' \Big), 
\end{gathered}
\end{equation*}
where 
\begin{equation*}
e^{\Gamma(E, h ,j)}  \defeq  \exp\Big( \frac{1}{E^{1/2}h} \sum_{x_\ell < x_j} |V_\ell|  + \int_{-\infty}^{x_j} \big[ \frac{1}{E^{1/2}h} |V_c| + (|x'| + 1)^{-1-\delta})dx' \big] \Big).
\end{equation*}
Therefore,
\begin{equation*} 
\begin{split}
h^{-1} |V_j| \int  & w_\eta (E^{1/2} |u|^2 + E^{-1/2}|hu'|^2)\delta_{x_j} \\
&= h^{-1} |V_j| w_\eta (x_j) (E^{1/2} |u(x_j)|^2 + E^{-1/2}|hu'(x_j)|^2) ,  \\
&\to h^{-1} |V_j|  (E^{1/2} |u(x_j)|^2 + E^{-1/2}|hu'(x_j)|^2) e^{\Gamma(E, h ,j)} \exp\Big( \frac{|V_j|}{2E^{1/2}h} \Big),
\end{split}
\end{equation*} 
while 
\begin{equation*} 
\begin{split}
h^{-1} &\eta^{-1} \pi^{-1/2} |V_j| \int   w_\eta (E^{1/2} |u|^2 + E^{-1/2}|hu'|^2) e^{-((x-x_j)/\eta)^2}dx \\
&= h^{-1}  \pi^{-1/2} |V_j| \int w_\eta (x_j + \eta x)  (E^{1/2} |u(x_j + \eta x)|^2 + E^{-1/2}|hu'(x_j + \eta x)|^2)  e^{-x^2}dx ,  \\
&\to   (E^{1/2} |u(x_j)|^2 + E^{-1/2}|hu'(x_j)|^2) e^{\Gamma(E, h ,j)} \\
&\cdot h^{-1}  \pi^{-1/2} |V_j| \int \exp\Big( \frac{|V_j|}{(\pi E)^{1/2}h} \int_{- \infty}^x  e^{-(x')^2} dx'   \Big) e^{-x^2} dx \\
&=  E^{1/2}  (E^{1/2} |u(x_j)|^2 + E^{-1/2}|hu'(x_j)|^2) e^{\Gamma(E, h ,j)}  \big(\exp \Big( \frac{|V_j|}{E^{1/2}h} \Big)    - 1\big). 
\end{split}
\end{equation*} 

In summary, we have shown that the second line of \eqref{meas pre penult est}, upon sending $\eta \to 0^+$, converges to
\begin{equation} \label{crucial nonnegativity}
\sum_j  (E |u(x_j)|^2 + |hu'(x_j)|^2) e^{\Gamma(E, h ,j)} 
 \big( \exp \Big( \frac{|V_j|}{E^{1/2}h} \Big)    - 1 -\frac{|V_j|}{E^{1/2}h}  \exp\Big( \frac{|V_j|}{2E^{1/2}h} \Big) \big) \ge 0.
\end{equation}
The nonnegativity follows from the fact that $e^x - 1 - xe^{x/2} \ge 0$ for all $x \ge 0$.

Returning to \eqref{meas pre penult est}, we fix $\gamma = 2^{-1}e^{-C_1(V,E,h,\delta)}$, so that we may absorb the first term in line three  into the left side, and invoke \eqref{crucial nonnegativity}, implying

\begin{equation} \label{meas penult est}
\begin{split}
\int  (|x| &+ 1)^{-1-\delta}(E|u|^2 + \tfrac{1}{2}|hu'|^2)dx \\
 &\le e^{C_1(V,E,h,\delta)} \Big( \frac{2e^{C_1(V,E,h,\delta)}}{h^2} \int (|x| + 1)^{1 + \delta}|(P- E \pm i\ep) u|^2 dx  +  2\varepsilon \int |uu'| dx\Big), \qquad h > 0.
\end{split}
\end{equation}

For the term in \eqref{meas penult est} having the factor of $\ep$:
\begin{equation}
   2\int |u u'|dx \le \frac{1}{h } \int |u|^2dx +  \frac{1}{h } \int  |h u'|^2dx, \qquad h > 0,  \label{w u u prime} 
  \end{equation}
  and
  \begin{equation} \label{h u prime square 1}
\begin{split}
       \int |hu'|^2dx &= \real \int((P - E \pm i\ep)u)\overline{u}dx + E\int |u|^2 dx + \int |u|^2V   \\
       &\le   \frac{1}{2} \int |(P - E \pm i\ep)u|^2dx + \big(\frac{1}{2} + E\big) \int |u|^2dx + \|V\|  \|u\|_{L^2}  \|u'\|_{L^2}   \\
         &\le  \frac{1}{2} \int |(P - E \pm i\ep)u|^2dx + \big(\frac{1}{2} + E + \frac{\|V\|}{2\gamma h^2} \big) \int |u|^2dx  \\
         &+  \frac{\gamma}{2} \| V\| \int  |h u'|^2dx, \qquad \gamma, \, h > 0.
   \end{split}
\end{equation}
Fixing $\gamma = \|V\|^{-1}$ in \eqref{h u prime square 1},  implies 
\begin{equation} \label{h u prime square 2}
        \int  |hu'|^2 dx \le  \int |(P - E \pm i\ep)u|^2dx + (1 + 2E + \frac{\|V\|^2}{ h^2} )  \int |u|^2dx, \qquad h > 0.
\end{equation}
Now, replace $\int |hu'|^2dx$ in \eqref{w u u prime} by the right side of \eqref{h u prime square 2}. From this, we get a bound for $2\int |uu'|dx$, which we insert into the in the last line of \eqref{meas penult est}. We conclude  
\begin{equation} \label{meas final estimate}
\begin{split}
\int  (|x| &+ 1)^{-1-\delta}(E|u|^2 + \tfrac{1}{2}|hu'|^2)dx \\
 &\le e^{C_1(V,E,h,\delta)}\Big( \big( \frac{2e^{C_1(V,E,h,\delta)}}{h^2} + \frac{1}{h} \big)\int (|x| + 1)^{1 + \delta}|(P- E \pm i\ep) u|^2 dx\\
 &+  \frac{\ep}{h} \big(2 + 2E + \frac{\|V\|^2}{ h^2} \big) \int |u|^2dx  \Big), \qquad \ep \in [0,1], \, h > 0.
\end{split}
\end{equation}

We absorb the last term of \eqref{meas final estimate} into the left side by estimating 
\begin{equation} \label{absorb last error}
\begin{split}
 \varepsilon \|u\|^2_{L^2} &=  \mp \imag \langle (P - E \pm i\ep)u, u  \rangle_{L^2} \\
 &\le \tfrac{\gamma^{-1}}{2} \|  (|x| + 1)^{\frac{1 + \delta}{2}}(P- E \pm i\ep) u\|^2_{L^2} + \tfrac{\gamma}{2}  \|(|x|+1)^{-\frac{1 + \delta}{2}} u \|^2_{L^2}, 
\end{split}
 \end{equation}
 and then choosing $\gamma = Eh^{-1}(2 + 2E + h^{-2} \| V \|^2)^{-1} e^{-C_1(V,E,h,\delta)}$. We then have
\begin{equation} \label{after absorb}
\begin{split}
\int  (|x| &+ 1)^{-1-\delta}(\tfrac{E}{2}|u|^2 + \tfrac{1}{2}|hu'|^2)dx \\
 &\le e^{C_1(V,E,h,\delta)} \Big( \frac{2e^{C_1(V,E,h,\delta)}}{h^2} + \frac{1}{h}  +  \frac{1}{2Eh^2} \big(2 + 2E + \frac{\|V\|^2}{h^2} \big)^2  e^{C_1(V,E,h,\delta)} \Big) \\
& \cdot \int (|x| + 1)^{1 + \delta}|(P- E \pm i\ep) u|^2 dx,  \qquad \ep \in [0,1], \, h > 0.
\end{split}
\end{equation}
 which implies \eqref{meas Carleman est}.\\
\end{proof}

\section{Exterior estimate}  \label{ext est section}
\begin{proof}[Proof of Theorem \ref{ext est meas thm}]
We again start from \eqref{F}, considering the case $\ep \in (0, 1]$ and putting
 \begin{equation*}
 f \defeq (P(h) - E \pm i\ep)^{-1} (|x| + 1)^{-(1+ \delta)/2} u.
 \end{equation*}
This time, we pair $F$ with a much simpler weight $w$. In particular we take $w$ to be the continuous, odd function vanishing on $[-R_0,R_0]$, and obeying
\begin{equation*}
w(x) = 1 - \frac{(1+R_0)^\delta}{(1+x)^{\delta}}, \quad x > R_0 \implies dw = w'(x) = \delta(1+R_0)^{\delta}(1+|x|)^{-1-\delta} \textbf{1}_{\ge R_0}.
\end{equation*}
Note that the same $w$ was used in the proof of \cite[Theorem 2]{dash20}. Since $wV = 0$, we find, proceeding as in \eqref{meas eta dwF},
 \begin{equation} \label{meas ext dwF} 
\begin{split}
    d(wF) = -2w^A   \real(((P - E \pm i\ep)u)\overline{u}')  \mp  2\ep w^A \imag \left(u \overline{u}'\right) + |hu'|^2 dw + E|u|^2dw,
      \end{split}
\end{equation}
and thus
\begin{equation} \label{pre penult meas ext est}
\begin{split}
   \delta(1+R_0)^{\delta} \int_{\R \setminus [-R_0, R_0]} (|x| &+ 1)^{-1-\delta}\big(E|u|^2 + |hu'|^2\big)\\
    &\le  \frac{1}{\gamma h^2}\int_{\R \setminus [-R_0, R_0]} |f |^2 + \gamma \int_{\R \setminus [-R_0, R_0]} (|x| + 1)^{-1-\delta}|hu'|^2  \\
     &+ 2  \varepsilon \int_{\R \setminus [-R_0, R_0]} w |u u'| \qquad \gamma, \, h > 0. 
\end{split}
\end{equation}
Taking $\gamma = 2^{-1} \delta (1 + R_0)^\delta$, we absorb the second term on the right side of \eqref{pre penult meas ext est} into the left side. To handle the term involving $\ep$, we proceed to find
\begin{equation}
   2\int_{\R \setminus [-R_0, R_0]} w|u u'| \le \frac{1}{h} \int_{\R \setminus [-R_0, R_0]} |u|^2 +  \frac{1}{h} \int_{\R \setminus [-R_0, R_0]}  w^2 |h u'|^2, \qquad h > 0, \label{ext w u u prime}
\end{equation}
and
\begin{equation}
    \begin{split} 
       \int_{\R \setminus [-R_0, R_0]} &w^2 |h u'|^2 = 2h^2 \real \int_{\R \setminus [-R_0, R_0]} w w' u' \overline{u} + \real \int_{\R \setminus [-R_0, R_0]} w^2(-h^2u'') \overline{u} \\
       &\le \delta(1 + R_0)^\delta h \int_{\R \setminus [-R_0, R_0]} \big(|u|^2 +w^2  |hu'|^2\big) \\
       &+ \real \int_{\R \setminus [-R_0, R_0]} w^2((P - E \pm i\ep)u)\overline{u}  + E \int_{\R \setminus [-R_0, R_0]} |u|^2 \\
        &\le \frac{1}{2} \int_{\R \setminus [-R_0, R_0]} |f|^2 + \big(\frac{1}{2} + E+ \delta(1 + R_0)^\delta h) \int_{\R \setminus [-R_0, R_0]} |u|^2 \\
        &+ \delta(1 + R_0)^\delta h  \int_{\R \setminus [-R_0, R_0]} w^2 |hu'|^2, \qquad h > 0.
    \end{split} \label{meas ext h u prime square}
 \end{equation}
Putting $h_0 \defeq 2^{-1} \delta^{-1}(1 + R_0)^{-\delta}$ and restricting $h \in (0,h_0]$ in \eqref{meas ext h u prime square} allows us to bound $\int_{\R \setminus [-R_0, R_0]}  w^2 |h u'|^2$ in \eqref{ext w u u prime} by twice the fourth line of \eqref{meas ext h u prime square}. Inserting the resulting estimate for $2\int_{\R \setminus [-R_0, R_0]} w|u \overline{u}'|$ into the right side of \eqref{pre penult meas ext est}  yields, for $\varepsilon \in (0, 1]$ and $h \in (0, h_0]$, 
\begin{equation} \label{penult meas ext est}
\begin{split}
     &\int_{\R \setminus [-R_0, R_0]}  (|x| + 1)^{-1-\delta}|u|^2  \\
     &\le \big( \frac{2}{\delta^2 E (1 + R_0)^{2\delta} h^2} + \frac{1}{ E \delta(1 + R_0)^\delta h} \big) \int_{\R \setminus [-R_0, R_0]} |f|^2 + \ep \frac{3 + 2E}{E \delta(1 + R_0)^\delta h} \int_{\R \setminus [-R_0, R_0]}  |u|^2. \\
     \end{split}
 \end{equation}
 The last term in line two of \eqref{penult meas ext est} may be estimated in manner similar \eqref{absorb last error}, leading to \eqref{ext est meas}. \\
\end{proof}

\section{Uniform resolvent estimate and resonance free strips} \label{unif resolv est section}

In this Section, we prove Theorem \ref{unif resolv est thm} as an application of Corollary \ref{meas resolv est thm}. We are concerned with the self-adjoint operator 
\begin{equation} \label{H}
H \defeq -\partial^2_x + V: \mathcal{D} \to L^2(\R),
\end{equation}
where $V$ remains a finite signed Borel measure, and has support in $[-R_0, R_0]$ for some $R_0> 0$. 

In this situation, $H$ is a \textit{black box Hamiltonian} in the sense of Sj\"ostrand and Zworski \cite{sz91}, as defined in \cite[Definition 4.1]{dz}. More precisely, in our setting this means the following. First, if  $u \in \mathcal{D}$, then $u|_{\mathbb R\setminus [-R_0,R_0]} \in H^2(\mathbb R\setminus [-R_0,R_0])$.  Second, for any $u \in \mathcal{D}$, we have $(Hu)|_{\mathbb R\setminus [-R_0,R_0]}  = - (u|_{\mathbb R\setminus [-R_0,R_0]})''$. Third, any $u \in H^2(\mathbb R)$ which vanishes on a neighborhood of $[-R_0,R_0]$ is also in $\mathcal{D}$. Fourth, $\textbf{1}_{[-R_0,R_0]} (H+i)^{-1}$ is compact on $\mathcal H$; this last condition follows from the fact that $\mathcal{D} \subseteq H^1(\R)$.

Then, by the analytic Fredholm theorem (see \cite[Theorem 4.4]{dz}), we have the following. In $\imag \lambda > 0$, the resolvent $(H - \lambda^2)^{-1}$ is meromorphic $L^2(\R) \to \mathcal{D}$; $\lambda$ is a pole of $(H -\lambda^2)^{-1}$, if and only if $\lambda^2 < 0$ is an eigenvalue of $H$. Furthermore, for $\chi \in C_0^\infty(\mathbb R; [0,1])$ with $\chi =1 $ near $[-R_0, R_0]$, the cutoff resolvent $\chi (H - \lambda^2)^{-1} \chi$ continues meromorphically $L^2(\R) \to \mathcal{D}$ from $\imag \lambda > 0$ to $\C$. The poles of the continuation are known as its \textit{resonances}. 


\begin{proof}[Proof of Theorem \ref{unif resolv est thm}]
Throughout the proof, we use $C(\|V\|, \lambda_0)$ to denote a positive constant which may depend on $\|V \|$ and $\lambda_0$, and whose value may change from line to line, but is always independent of $\lambda$. 

We first show \eqref{unif resolv est} for $k = 0$, $\imag \lambda > 0$, and $|\real \lambda | \ge \lambda_0$.  In this case, let us expand
\begin{equation} \label{expand lambda square}
\begin{split}
\chi (H &- \lambda^2)^{-1} \chi\\
&=  \chi (\partial^2_x + V  - (\real \lambda)^{2} + (\imag \lambda)^2 - i 2\real \lambda \cdot \imag \lambda)^{-1} \chi \\
 &= (\real \lambda)^{-2} \chi ((\real \lambda)^{-2} \partial^2_x + (\real \lambda)^{-2} V  -1  + (\imag \lambda)^2 (\real \lambda)^{-2}  - i 2(\real \lambda)^{-1} \imag \lambda)^{-1} \chi. 
\end{split}
\end{equation}
If $\imag \lambda \ge |\real \lambda|/2$, then by the spectral theorem for self-adjoint operators,
\begin{equation} \label{use spect thm}
\|\chi (H - \lambda^2)^{-1} \chi\|_{L^2 \to L^2} \le |\real \lambda|^{-2} \le \lambda_0^{-1} |\real \lambda|^{-1}, \qquad \imag \lambda \ge |\real \lambda|/2, \, |\real \lambda | \ge \lambda_0.
\end{equation}
If $\imag \lambda < |\real \lambda|/2$, we apply \eqref{exp est meas} to \eqref{expand lambda square} (the notational correspondence is $\delta =1$, $E = 1 -   (\imag \lambda)^2 (\real \lambda)^{-2} \ge 3/4$, $\ep = 2 |\real \lambda|^{-1} \imag \lambda \in (0,1]$, $h =|\real \lambda|^{-1}$, and $V(x, h) = h^2 V$). Therefore,
\begin{equation} \label{L2 L2 bd uhp} 
\|\chi (H - \lambda^2)^{-1} \chi\|_{L^2 \to L^2} \le C(\|V\|,  \lambda_0) |\real \lambda|^{-1}, \qquad  \imag \lambda > 0, \, |\real \lambda | \ge \lambda_0.
\end{equation}

Next, we adapt the proof of \cite[Proposition 2.5]{bgt04} to show
\begin{equation} \label{L2 H1 bd uhp}
\|\chi (H - \lambda^2)^{-1} \chi\|_{L^2 \to H^1} \le C(\|V\|,  \lambda_0), \qquad 0 < \imag \lambda \le 1, \, |\real \lambda | \ge \lambda_0.
\end{equation}
We employ the notation, 
\begin{equation} \label{apply op}
(H - \lambda^2) u = \chi f, \qquad 0 < \imag \lambda \le 1, \, |\real \lambda | \ge \lambda_0,\, f \in L^2(\R), \, u \in \mathcal{D}, 
\end{equation}
and make use of additional cutoffs
\begin{equation} \label{cutoffs}
 \chi_1,\, \chi_2 \in C^\infty_0(\R ; [0,1]), \quad \chi_1 = 1 \text{ on } \supp \chi, \quad \chi_2 = 1 \text{ on } \supp \chi_1. 
\end{equation}
Observe
\begin{equation*}
\| \chi(H - \lambda^2)^{-1} \chi f \|_{H^1} \le \| \chi u \|_{L^2} + \|(\chi u)'\|_{L^2} \le C( \| \chi_2 u \|_{L^2} + \|\chi_1 u'\|_{L^2}),
\end{equation*}
where here and below, $C$ is a positive constant, which may depend on $\|V\|$ and the derivatives of the cutoffs, and which may change between lines, but stays independent of $\lambda$. So, by \eqref{L2 L2 bd uhp} it suffices to show 
\begin{equation} \label{chi1 u prime}
\|\chi_1 u'\|^2_{L^2} \le C \big( (|\real \lambda| +1)^2 \|\chi_2 u\|_{L^2}^2 + \|\chi_2 f\|_{L^2}^2 \big).
\end{equation}

Multiplying \eqref{apply op} by $\chi^2_1 \overline{u}$ and integrating gives
\begin{equation*}
\int \chi_1^2 \chi f \overline{u}dx = \int \chi_1^2 |u'|^2dx + 2 \int \chi_1' \overline{u} \chi_1 u' dx+ \int \chi^2_1 |u|^2 V - \lambda^2 \int \chi^2_1 |u|^2dx, 
\end{equation*}
where \eqref{prod to form} was used, and consequently 
\begin{equation} \label{prepare to absorb}
\begin{split}
\int \chi_1^2 |u'|^2dx &\le \int |\chi_1^2 \chi f \overline{u}|dx  + 2\int |\chi_1' u' \chi_1 \overline{u}| dx  +\| V\| \| \chi_1 u\|_{L^2}  \| (\chi_1 u)'\|_{L^2} \\
&+ (|\real \lambda| + 1)^2 \int \chi^2_1 |u|^2 dx \\
&\le C\big( \|\chi_2 f\|^2_{L^2}  + (|\real \lambda| + 1)^2 \|\chi_2 u\|^2_{L^2} \big) + \frac{1}{2} \int \chi_1^2 |u'|^2dx. 
\end{split}
\end{equation}
Absorbing the last term on the right side into the left side confirms \eqref{chi1 u prime}. 

With \eqref{L2 L2 bd uhp} and  \eqref{L2 H1 bd uhp}, for $0 < \imag \lambda \le 1, \, |\real \lambda | \ge \lambda_0$, and $f \in L^2(\R)$,
\begin{equation*}
\begin{split}
\| \chi (H- \lambda^2)^{-1} \chi f \|_{\mathcal{D}} & \le   \| \chi (H- \lambda^2)^{-1} \chi f \|_{L^2} + \| H\chi (H- \lambda^2)^{-1} \chi f \|_{L^2} \\
&\le  \| \chi (H- \lambda^2)^{-1} \chi f \|_{L^2} + \| [-\partial^2_x, \chi] \chi_1 (H- \lambda^2)^{-1} \chi f \|_{L^2} \\
& + \| \chi ((H - \lambda^2) + \lambda^2) (H- \lambda^2)^{-1} \chi f \|_{L^2} \\
&\le  \| \chi (H- \lambda^2)^{-1} \chi f \|_{L^2} +  \| f \|_{L^2}\\
&+ \| [-\partial^2_x, \chi] \chi_1 (H- \lambda^2)^{-1} \chi f \|_{L^2} + (|\real \lambda| + 1)^2 \| \chi (H- \lambda^2)^{-1} \chi f \|_{L^2} \\
&\le C(\| V\|, \lambda_0) (|\real \lambda| + 1) \| f \|_{L^2}.
\end{split}
\end{equation*}
This implies \eqref{unif resolv est D} for  $0 < \imag \lambda \le 1, \, |\real \lambda | \ge \lambda_0$, and that continued resolvent $L^2(\R) \to \mathcal{D}$ has no poles in $\R \setminus \{0\}$ (since $\lambda_0 > 0$ is arbitrary).

Now, we turn to showing \eqref{unif resolv est D} in strips in the lower half plane. For this, we use a resolvent identity argument due to Vodev \cite[Theorem 1.5]{vo14}, adapted to the non-semiclassical case. It yields holomorphicity of $\chi (H-\lambda^2)^{-1} \chi : L^2(\R) \to \mathcal{D}$ in $|\real \lambda | \ge \lambda_0, \, - \ep_0 < \imag \lambda  <  0$ ($\ep_0 > 0$ sufficiently small), with bounds in these strips of the form \eqref{unif resolv est} and \eqref{unif resolv est D}.

Fix $\tilde{\chi} \in C_0^\infty(\R; [0,1])$ such that $\tilde{\chi} = 1$ near $[-R_0, R_0]$ and $\chi = 1$ near $\supp \tilde{\chi}$. We are going to develop several resolvent identities, and let us work initially with $\lambda, \mu$ such that $\imag \lambda, \ \imag \mu > 0$, $|\real \lambda|,\, |\real \mu| \ge \lambda_0$ (before sending these parameters into the the lower half plane).  By the first resolvent identity, 
\begin{equation*}
\begin{split}
(H - \lambda^2)^{-1} - (H - \mu^2)^{-1} &= (\lambda^2- \mu^2) (H - \lambda^2)^{-1}(H - \mu^2)^{-1} \\
& =(\lambda^2- \mu^2)(H - \lambda^2)^{-1} \tilde{\chi}( 2 - \tilde{\chi})(H - \mu^2)^{-1} \\
&+ (\lambda^2- \mu^2)(H - \lambda^2)^{-1} (1 - \tilde{\chi})^2 (H - \mu^2)^{-1}.
\end{split}
\end{equation*}
As operators on $H^2(\R)$,
\begin{equation*}
\begin{split}
(1- \tilde{\chi})& (- \partial^2_x - \lambda^2) - (H - \lambda^2)(1 - \tilde{\chi}) = [-\partial^2_x,\tilde{\chi}] \implies \\
&(H - \lambda^2)^{-1}(1 - \tilde{\chi}) - (1 - \tilde{\chi})(- \partial^2_x - \lambda^2)^{-1} = (H - \lambda^2)^{-1} [-\partial^2_x, \tilde{\chi}](-\partial^2_x - \lambda^2)^{-1},
\end{split}
\end{equation*}
while as operators on $\mathcal{D}$,
\begin{equation*}
\begin{split}
(-\partial^2_x - \mu^2)&( 1- \tilde{\chi}) - (1- \tilde{\chi})(H - \mu^2)   = [ \partial^2_x, \tilde{\chi}] \implies \\
&( 1-\tilde{\chi})(H - \mu^2)^{-1} - (-\partial^2_x -\mu^2)^{-1}(1- \tilde{\chi}) = (-\partial^2_x -\mu^2)^{-1} [ \partial^2_x, \tilde{\chi}] (H - \mu^2)^{-1}.
\end{split} 
\end{equation*}
 Using $\chi  = 1$ on $\supp \tilde{\chi}$ and the three previous calculations,
\begin{equation} \label{key id for est transfer}
\begin{split}
\chi& (H - \lambda^2)^{-1} \chi - \chi (H - \mu^2)^{-1} \chi  = (\lambda^2 - \mu^2) \chi (H - \lambda^2)^{-1} \chi \tilde{\chi} ( 2 - \tilde{\chi})(H - \mu^2)^{-1} \chi \\
&+ (\lambda^2 - \mu^2) \chi ((1 - \tilde{\chi})(-\partial^2_x - \lambda^2)^{-1} + (H - \lambda^2)^{-1} [- \partial^2_x, \tilde{\chi}](-\partial^2_x - \lambda^2)^{-1})\\
&\cdot ((-\partial^2_x -\mu^2)^{-1}(1- \tilde{\chi}) + (-\partial^2_x -\mu^2)^{-1} [ \partial^2_x,\tilde{\chi}] (H - \mu^2)^{-1}) \chi \\
&= (\lambda^2 - \mu^2) \chi (H - \lambda^2)^{-1} \chi \tilde{\chi} ( 2 -\tilde{\chi})\chi(H - \mu^2)^{-1}\chi \\
&+ (1 - \tilde{\chi} - \chi (H - \lambda^2)^{-1} \chi [\partial^2_x, \tilde{\chi}])\\
&\cdot (\chi (-\partial^2_x -\lambda^2)^{-1} \chi- \chi(-\partial^2_x -\mu^2)^{-1} \chi)\\
& \cdot(1- \tilde{\chi} +  [\partial^2_x, \tilde{\chi}] \chi (H - \mu^2)^{-1}\chi). 
\end{split}
\end{equation}
To get the equality sign in line four of \eqref{key id for est transfer}, we expanded the terms appearing in lines two and three, and repeatedly applied the first resolvent identity to $(\lambda^2 - \mu^2)(-\partial^2_x -\lambda^2)^{-1}(-\partial^2_x -\mu^2)^{-1}$.

Before proceeding to use \eqref{key id for est transfer} to estimate $\|\chi (H - \lambda^2)^{-1} \chi \|_{L^2 \to \mathcal{D}}$ in the lower half plane, we quote a well known estimate for the difference of continued free resolvents (see \cite[Section 5]{vo14} or \cite[Section 3.2]{sh18}):
\begin{equation} \label{line int bd}
\begin{gathered}
\| \chi (-\partial^2 _x-\lambda^2)^{-1} \chi  - \chi (-\partial^2 _x-\mu^2)^{-1} \chi \|_{H^{k_1} \to H^{k_2}}  \le C(\lambda_0) |\lambda - \mu|   \sup_{ \lambda' \in \Gamma_{\lambda, \mu}} |\lambda'|^{k_2 - k_1 -1},\\
k_1 \in \{0, 1\}, \quad k_2 \in  \{0, 1, 2 \}, \quad |\real \lambda|,\, |\real \mu| \ge \lambda_0, \quad  \imag \lambda, \imag \mu \ge -1,
\end{gathered}
\end{equation} 
where $\Gamma_{\lambda, \mu}$ denotes the line segment connecting $\lambda$ and $\mu$.

The identity  \eqref{key id for est transfer} continues to hold after meromorphically continuing both sides ($L^2 \to \mathcal{D}$) to $\lambda,\, \mu \in \C$. Now, fix $\mu \in \R$ with $|\mu| \ge \lambda_0$; Assume $\lambda$ in the lower half plane is not a pole of the continued cutoff resolvent, and obeys $|\lambda - \mu| \le \min(1, \lambda_0/2, \gamma)$, for suitable $0 < \gamma \ll 1$ to be chosen. Then $\|\chi (H - \mu)^{-1} \chi \|_{L^2 \to H^k} \le C(\|V\|, \lambda_0) |\mu|^{k-1}$, $k = 0,\, 1$, $\|\chi (H - \mu)^{-1} \chi \|_{L^2 \to \mathcal{D}} \le C(\|V\|, \lambda_0) |\mu|$, \eqref{key id for est transfer}, and \eqref{line int bd} imply
\begin{equation} \label{absorb resolv norm}
\begin{split}
\|\chi& (H - \lambda^2)^{-1} \chi \|_{L^2 \to \mathcal{D}} \le C(\|V\|, \lambda_0) \Big(| \mu| +|\lambda^2 - \mu^2| |\mu|^{-1} \|\chi (H - \lambda^2)^{-1} \chi \|_{L^2 \to \mathcal{D}} \\
&+ \| (1 - \tilde{\chi}) \chi ((-h^2\partial^2_x -\lambda^2)^{-1} - (-\partial^2_x -\mu^2)^{-1}) \chi \|_{L^2 \to \mathcal{D}} \\
&+ \| \chi ((-h^2\partial^2_x -\lambda^2)^{-1} - (-\partial^2_x -\mu^2)^{-1}) \chi \|_{L^2 \to H^1} \|\chi (H - \lambda^2)^{-1} \chi \|_{L^2 \to \mathcal{D}}\\
&+ |\mu|  \| \chi ((-h^2\partial^2_x -\lambda^2)^{-1} - (-\partial^2_x -\mu^2)^{-1}) \chi\|_{L^2 \to L^2} \|\chi (H - \lambda^2)^{-1} \chi \|_{L^2 \to \mathcal{D}} \Big) \\
&\le C(\|V\|, \lambda_0) \Big( |\mu| + \gamma \|\chi (H - \lambda^2)^{-1} \chi \|_{L^2 \to \mathcal{D}} \Big). \\
\end{split}
\end{equation}
Fixing $\gamma$ small enough (depending on $\lambda_0$) allows us to absorb the term involving $\|\chi (H - \lambda^2)^{-1} \chi \|_{L^2 \to \mathcal{D}}$ on the right side of \eqref{absorb resolv norm} into the left side. This precludes resonances in the region $\imag \lambda \le 0$, $|\lambda - \mu| \le \min(1, \lambda_0/2, \gamma )$, and in this region we have $\|\chi (H - \lambda^2)^{-1} \chi \|_{L^2 \to \mathcal{D}} \le C(\|V\|, \lambda_0)|\mu|$. 

Starting from \eqref{key id for est transfer}, we use the same strategy to show \eqref{unif resolv est} in strips in the lower half plane. Thanks to \eqref{L2 L2 bd uhp}, \eqref{L2 H1 bd uhp}, and \eqref{line int bd}, more negative powers of $|\mu|$ appear while making an estimate similar to \eqref{absorb resolv norm}, since now we need only use operator norms $L^2(\R) \to L^2(\R)$ or $L^2(\R) \to H^1(\R)$.   \\ 
\end{proof}

To conclude this section, we consider the two by two matrix operator
\begin{equation*}
    G \defeq -i \begin{pmatrix} 0 & 1 \\ -H & 0 \end{pmatrix} : \mathcal{D} \oplus L^2(\R) \to L^2(\R) \oplus L^2(\R),
\end{equation*}
which arises naturally from rewriting \eqref{wave eqn} as a first order system. A short computation yields 
\begin{equation} \label{inv G plus lambda}
(G + \lambda)^{-1} = \begin{pmatrix} -\lambda (H - \lambda^2)^{-1} & -i(H - \lambda^2)^{-1} \\ i \lambda^2 (H - \lambda^2)^{-1} + i &  -\lambda (H - \lambda^2)^{-1}  \end{pmatrix} \quad \text{when } \imag \lambda > 0 \text{ and } (H - \lambda^2)^{-1} \text{exists}.
\end{equation}

The following Corollary of Theorem \ref{unif resolv est thm} is essentially well-known, and is an important input to the proof of Theorem \ref{LED thm} in Section \ref{wave decay section}. 
\begin{corollary} \label{matrix op cor}
Let $\chi \in C^\infty_0(\R; [0,1])$ be identically one near $[-R_0, R_0]$. The operator 
\begin{equation} \label{matrix op cutoffs}
\begin{split}
    \chi (G + &\lambda)^{-1} \chi \defeq \\
    & \begin{pmatrix} -\lambda \chi (H - \lambda^2)^{-1} \chi & -i\chi (H - \lambda^2)^{-1}\chi \\ i \lambda^2 \chi (H - \lambda^2)^{-1} \chi + i \chi^2 &  -\lambda \chi (H - \lambda^2)^{-1} \chi  \end{pmatrix} : L^2(\R) \oplus L^2(\R), \to \mathcal{D} \oplus L^2(\R)
\end{split}
\end{equation}
continues meromorphically from $\imag \lambda > 0$ to $\C$, without poles on $\mathbb R \setminus \{0\}$.  For any $\lambda_0 > 0$, there exist $C,\, \lambda_0, \, \ep_0 > 0$ so that
 \begin{equation} \label{matrix op unif bd}
       \|\chi (G + \lambda)^{-1} \chi \|_{H^1(\R) \oplus L^2(\R) \to H^1(\R) \oplus L^2(\R)} \le C, \qquad |\real \lambda| \ge \lambda_0, \, |\imag \lambda| \le \ep_0.
 \end{equation}
 
 If $\chi(G + \lambda)^{-1}\chi$ has a pole at $\lambda = 0$, it is a simple pole. More precisely, if $w_0 \in H^1(\mathbb R)$ and $w_1 \in L^2(\mathbb R)$, then 
\begin{equation}\label{e:residuecomp}
\lim_{\lambda \to 0} \lambda \chi (G+\lambda)^{-1} \chi \begin{pmatrix} w_0 \\ w_1  \end{pmatrix} =  \begin{pmatrix}  -i \lim_{\lambda \to 0} \lambda \chi  (H - \lambda^2)^{-1}\chi w_1\\ 0   \end{pmatrix} = \begin{pmatrix} \langle \chi u_0, w_1 \rangle_{L^2} \chi u_0\\ 0  \end{pmatrix}.
\end{equation}
for some real valued $u_0 \in H^1_{\loc}(\R) \cap H^2_{\loc}(\R \setminus [-R_0, R_0])$ with $Hu_0 = 0 $ in the sense of distributions.

\end{corollary}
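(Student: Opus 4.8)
The plan is to deduce everything from the explicit block form of $(G+\lambda)^{-1}$ together with the bounds of Theorem~\ref{unif resolv est thm}. First I would verify \eqref{inv G plus lambda} directly, by checking that $(G+\lambda)$ times the claimed right-hand side equals the identity on $\mathcal D\oplus L^2$, using $H(H-\lambda^2)^{-1}=I+\lambda^2(H-\lambda^2)^{-1}$; conjugating by $\chi$ and writing $\chi\cdot\chi=\chi^2$ where an entry carries no resolvent produces \eqref{matrix op cutoffs}. Each entry of $\chi(G+\lambda)^{-1}\chi$ is then $p(\lambda)\,\chi(H-\lambda^2)^{-1}\chi$ with $p\in\{1,\lambda,\lambda^2\}$, plus the bounded term $i\chi^2$ in one slot, so the meromorphic continuation from $\imag\lambda>0$ to $\mathbb C$, the mapping property $L^2\oplus L^2\to\mathcal D\oplus L^2$ (via \eqref{unif resolv est D}), and the absence of poles on $\mathbb R\setminus\{0\}$ are all inherited entrywise from Theorem~\ref{unif resolv est thm}.

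For the uniform bound \eqref{matrix op unif bd}, I would read off $\chi(G+\lambda)^{-1}\chi\bigl(\begin{smallmatrix}w_0\\w_1\end{smallmatrix}\bigr)$ and estimate term by term. The contributions $\chi(H-\lambda^2)^{-1}\chi w_1\in H^1$ and $\lambda\chi(H-\lambda^2)^{-1}\chi w_1\in L^2$ are controlled immediately by \eqref{unif resolv est} (the $L^2\to H^1$ bound, respectively the $L^2\to L^2$ bound $O(|\real\lambda|^{-1})$), and $\chi^2w_0$ is trivial. The two genuinely delicate terms are those in which the $H^1$-datum $w_0$ meets a power of $\lambda$: namely $\lambda^2\chi(H-\lambda^2)^{-1}\chi w_0$, which must land in $L^2$ with norm $\lesssim\|w_0\|_{H^1}$, and $\lambda\chi(H-\lambda^2)^{-1}\chi w_0$, which must land in $H^1$. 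For the former I would combine: (i) the identity $\lambda^2(H-\lambda^2)^{-1}=-I+H(H-\lambda^2)^{-1}$ together with the localization identity $\chi H(1-\chi_1)=0$ as operators (for $\chi_1\in C_0^\infty$ with $\chi_1\equiv1$ near $\supp\chi\cup[-R_0,R_0]$), which gives $\lambda^2\chi(H-\lambda^2)^{-1}\chi=-\chi^2+\chi(-\partial_x^2+V)\,\chi_1(H-\lambda^2)^{-1}\chi$; (ii) duality — since the resonance-free strip of Theorem~\ref{unif resolv est thm} is invariant under $\lambda\mapsto-\bar\lambda$ and $(\chi(H-\lambda^2)^{-1}\chi)^*=\chi(H-(-\bar\lambda)^2)^{-1}\chi$, it suffices to bound $\lambda^2\chi(H-\lambda^2)^{-1}\chi:L^2\to H^{-1}$; (iii) the observation that in one dimension a finite Borel measure $\mu$ lies in $H^{-1}$ with $\|\mu\|_{H^{-1}}\le\|\mu\|_{\mathrm{TV}}$ (because $\|\varphi\|_{L^\infty}\le\|\varphi\|_{H^1}$). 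Then the measure part $\chi V\chi_1(H-\lambda^2)^{-1}\chi$ is bounded by $\|V\|$ times the $L^2\to H^1$ bound for $\chi_1(H-\lambda^2)^{-1}\chi$ (using $H^1\hookrightarrow L^\infty$), and the part $\chi(-\partial_x^2)\chi_1(H-\lambda^2)^{-1}\chi$ by the same $L^2\to H^1$ bound (since $-\partial_x^2:H^1\to H^{-1}$ and multiplication by $\chi$ is bounded on $H^{-1}$).

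The remaining $H^1$-valued output is the first component $v_0=\chi u$, where $(H-\lambda^2)u=-\lambda\chi w_0-i\chi w_1=:R$. Here I would estimate $\|\chi u\|_{H^1}$ directly: pairing the equation with $\chi^2\overline u$ and integrating by parts (Proposition~\ref{ibp bv prop}) yields $\int\chi^2|u'|^2\,dx=-2\int\chi\chi'\overline u\,u'\,dx-\int\chi^2|u|^2\,V+\lambda^2\int\chi^2|u|^2\,dx+\int\chi^2\overline u\,R\,dx$, and one bounds the right side by: absorbing the commutator term together with $\tfrac12\int\chi^2|u'|^2$ via Cauchy–Schwarz; absorbing $\int V|\chi u|^2$ using $\|\chi u\|_{L^\infty}^2\le\|\chi u\|_{L^2}\|(\chi u)'\|_{L^2}$ (the inequality already exploited in Section~\ref{self adjointness section}); bounding $\lambda^2\int\chi^2|u|^2$ through the $L^2$ estimate on $\lambda\chi(H-\lambda^2)^{-1}\chi w_1$ and the estimate on $\lambda^2\chi(H-\lambda^2)^{-1}\chi w_0$ just obtained; and writing $\int\chi^2\overline u\,R=\langle R,\chi^2(H-\lambda^2)^{-1}R\rangle$, which one expands and bounds termwise with the same inputs plus the dual $L^2\to H^{-1}$ estimates. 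The second output component is handled the same way using $\lambda^2\chi(H-\lambda^2)^{-1}\chi=-\chi^2+\chi(-\partial_x^2+V)\chi_1(H-\lambda^2)^{-1}\chi$. Keeping all the powers of $\lambda$ and the measure $V$ under control while upgrading to the energy norm is where I expect the bulk of the work to lie; the rest is bookkeeping.

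Finally, for the behaviour at $\lambda=0$: once one knows that $\chi(H-\lambda^2)^{-1}\chi$ has at most a simple pole there, the factors $\lambda^2\chi(H-\lambda^2)^{-1}\chi$, $\lambda^3\chi(H-\lambda^2)^{-1}\chi$ and $\lambda\chi^2$ all vanish at $\lambda=0$, so by \eqref{matrix op cutoffs} the only possible singularity of $\chi(G+\lambda)^{-1}\chi$ at $0$ sits in its $(1,2)$ entry $-i\chi(H-\lambda^2)^{-1}\chi$; hence the pole is simple with residue $\bigl(\begin{smallmatrix}0&-iR_{-1}\\0&0\end{smallmatrix}\bigr)$, where $R_{-1}:=\lim_{\lambda\to0}\lambda\chi(H-\lambda^2)^{-1}\chi$, and multiplying by $\lambda$ and letting $\lambda\to0$ gives the first equality in \eqref{e:residuecomp}. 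It then remains to run the standard zero-energy resolvent analysis for $\chi(H-\lambda^2)^{-1}\chi$. I would use the Birman–Schwinger-type factorization $(H-\lambda^2)^{-1}=(I+(-\partial_x^2-\lambda^2)^{-1}V)^{-1}(-\partial_x^2-\lambda^2)^{-1}$ — legitimate because $V$ is a compactly supported finite measure, so $(-\partial_x^2-\lambda^2)^{-1}V$ is compact on $C([-R_0,R_0])$ and holomorphic near $\lambda=0$ — combined with the simple pole of the free resolvent at $\lambda=0$ and analytic Fredholm theory, together with the facts that $H$ has no zero eigenvalue (a solution of $-u''+u_cV=0$ is affine off $[-R_0,R_0]$, hence not in $L^2$ unless $u\equiv0$) and that its space of bounded solutions of $Hu=0$ is at most one-dimensional. (Alternatively this can be read off from the Jost-solution/Wronskian formula for the resolvent kernel of a Sturm–Liouville operator with measure coefficients.) This yields that the pole of $\chi(H-\lambda^2)^{-1}\chi$ at $0$ is simple with rank-one residue. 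Realness of $V$ and one-dimensionality of the resonance space let one take $u_0$ real; Proposition~\ref{ftc bv prop} applied to $-u_0''=-u_{0,c}V$ shows $u_0\in H^1_{\loc}(\mathbb R)$ with $u_0'$ of locally bounded variation, and $u_0$ affine — in particular $H^2_{\loc}$ — off $[-R_0,R_0]$. Lastly, $(\chi(H-\lambda^2)^{-1}\chi)^*=\chi(H-(-\bar\lambda)^2)^{-1}\chi$ forces $R_{-1}^*=-R_{-1}$, so $-iR_{-1}$ is self-adjoint, and the zero-energy analysis identifies it as the nonnegative rank-one operator $\langle\chi u_0,\cdot\rangle_{L^2}\chi u_0$ after normalizing $u_0$, which is \eqref{e:residuecomp}.
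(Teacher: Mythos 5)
Your proposal is correct in substance and follows the same overall architecture as the paper's proof: read the claims off the block formula \eqref{matrix op cutoffs} entrywise, isolate the two entries where the $H^1$ datum meets a power of $\lambda$, handle $\lambda^2\chi(H-\lambda^2)^{-1}\chi$ by rewriting it as $-\chi^2+\chi H(H-\lambda^2)^{-1}\chi$ and dualizing, and handle the $(1,1)$ entry by pairing the equation with a cutoff squared times $\overline u$ as in \eqref{prepare to absorb}. Two genuine differences are worth recording. First, the paper establishes the $H^1$-source bounds only for $0<\imag\lambda\le\ep_0$ by the direct method (using an approximating sequence $f_k\in\mathcal D$ to justify commuting $H$ past the resolvent, and the form identity \eqref{prod to form} in place of your $H^{-1}$ bookkeeping --- these are the same estimate), and then transfers them to the lower half of the strip by re-running the resolvent identity \eqref{key id for est transfer} with extra powers of $\lambda$. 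Your version uses only the $L^2\to L^2$ and $L^2\to H^1$ bounds of Theorem \ref{unif resolv est thm}, already valid in the whole strip, together with the invariance of that strip under $\lambda\mapsto-\bar\lambda$, so it covers $\imag\lambda<0$ in one pass; this is a mild simplification, provided you make explicit the density/commutation step ($H(H-\lambda^2)^{-1}\chi f_k=(H-\lambda^2)^{-1}H\chi f_k$ for $f_k\in\mathcal D$, then pass to the limit) that the duality tacitly relies on. Second, at $\lambda=0$ the paper follows the argument of \cite[Theorem 2.7]{dz} verbatim (citing \cite[Theorem 3.1]{ecte13} for the initial value problem), whereas you propose a Birman--Schwinger factorization. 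That route is viable, but as written it contains a slip: the free resolvent $(-\partial_x^2-\lambda^2)^{-1}$ has kernel $\tfrac{i}{2\lambda}e^{i\lambda|x-y|}$ and is \emph{not} holomorphic near $\lambda=0$, so $(-\partial_x^2-\lambda^2)^{-1}V$ is not either; one must first split off the rank-one singular part $\tfrac{i}{2\lambda}\mathbf 1\otimes\mathbf 1$ and run a finite-rank (Grushin/Schur-complement) analysis, which is precisely what produces the simple pole with rank-one residue. Since you invoke ``the simple pole of the free resolvent'' in the same sentence, the plan is recoverable, but the holomorphy claim should be corrected. The remaining ingredients you list (no zero eigenvalue because solutions of $Hu=0$ are affine off $[-R_0,R_0]$; one-dimensionality and realness of the resonance state; anti-self-adjointness of the residue) match what the cited argument requires.
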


\begin{proof}
By the blackbox formalism (see \cite[Definition 4.1 and Theorem 4.4]{dz}) and Theorem \ref{unif resolv est thm}, $\chi (H - \lambda^2)^{-1} \chi : L^2(\R) \to \mathcal{D}$ continues meromorphically from $\imag \lambda >0$ to $\mathbb{C}$, and has no poles in $\R \setminus \{0\}$. This implies that each entry of \eqref{matrix op cutoffs} continues meromorphically as an operator between the appropriate spaces, again without poles in $\R \setminus \{0\}$.

With \eqref{unif resolv est} already in hand, to establish \eqref{matrix op unif bd}, we need to show for any $\lambda_0 > 0$, there exist $C, \, \ep_0 > 0$ so that 
\begin{gather}
\|\lambda^2 \chi (H - \lambda^2)^{-1} \chi +  \chi^2\|_{H^1 \to L^2} = \|\chi H (H - \lambda^2)^{-1} \chi\|_{H^1 \to L^2} \le C , \label{two one entry} \\
\| \lambda \chi (H - \lambda^2)^{-1} \chi \|_{H^1 \to H^1} \le C, \label{one one entry}
\end{gather}
for $ |\real \lambda| \ge \lambda_0$ and $|\imag \lambda| \le \ep_0$. First, we first prove \eqref{two one entry} for $|\real \lambda| \ge \lambda_0$ and $0 < \imag \lambda \le \ep_0$, and then handle the remaining cases.

Let us use the notation
\begin{equation} \label{f in H1}
u = (H - \lambda^2)^{-1} \chi f \in \mathcal{D}, \quad f \in H^1(\R), \quad |\real \lambda | \ge \lambda_0 \text{ and } \imag \lambda > 0.
\end{equation}
Let $\chi_1 \in C_0^\infty(\R)$ with $\chi_1 = 1$ near $\supp \chi$. As we showed in the proof of Lemma \ref{self adjointness lemma}, the form domain of $(H ,\mathcal{D})$ is $H^1(\R)$, so there exists a sequence $f_k \in \mathcal{D}$ converging to $f$ in $H^1(\R)$, and corresponding functions $u_k \defeq (H - \lambda^2)^{-1} \chi f_k$ converging to $u$ in $(\mathcal{D}, \| \cdot \|_{\mathcal{D}})$. Since $Hu_k = (H -\lambda^2)^{-1} \chi_1 H \chi f_k$,
\begin{equation} \label{chi H u}
\begin{split}
\| \chi H u \|_{L^2} = \lim_{k \to \infty} \| \chi H u_k \|_{L^2} \le \lim_{k \to \infty} \|  \chi_1  (H -\lambda^2)^{-1} \chi_1 H \chi f_k\|_{L^2}.
\end{split}
\end{equation}
Furthermore, by \eqref{prod to form}, there exists $C(\|V \|) > 0$ depending on $\| V\|$ so that for any $v \in L^2(\R)$, 
\begin{equation*}
\begin{split} 
|\langle \chi_1  (H -\lambda^2)^{-1} \chi_1 H \chi f_k, v \rangle_{L^2}| &= |\langle  H \chi f_k, \chi_1  (H -(-\overline{\lambda})^2)^{-1} \chi_1v \rangle_{L^2} \\
&\le C(\|V\|) \| \chi f_k \|_{H^1} \| \chi_1  (H -(-\overline{\lambda})^2)^{-1} \chi_1v\|_{H^1}. 
\end{split} 
\end{equation*}
Since \eqref{unif resolv est} gives $\| \chi_1  (H -(-\overline{\lambda})^2)^{-1} \chi_1v\|_{H^1} \le C(\|V \|, \lambda_0) \| v\|_{L^2}$ (provided $\imag \lambda$ is small), we conclude $\|\chi_1  (H -\lambda^2)^{-1} \chi_1 H \chi f_k\|_{L^2} \le C(\|V \|, \lambda_0) \| \chi f_k \|_{H^1}$. Returning to \eqref{chi H u}, we now find 
\begin{equation*} 
\| \chi H u \|_{L^2} \le C(\|V \|, \lambda_0) \lim_{k \to \infty}  \| \chi f_k \|_{H^1} \le C(\|V \|, \lambda_0)  \| f \|_{H^1}, \qquad |\real \lambda| \ge \lambda_0, \, 0 < \imag \lambda \le \ep_0.
\end{equation*}
In turn,
\begin{equation} \label{lambda square unif bd}
\begin{gathered}
\|\lambda^2 \chi (H - \lambda^2)^{-1} \chi\|_{H^1 \to L^2} = \| \chi H (H- \lambda^2)^{-1} \chi - \chi^2\|_{H^1 \to L^2}  \le C(\|V\|, \lambda_0), \\ 
|\real \lambda| \ge \lambda_0 \text{ and } 0 < \imag \lambda \le \ep_0.
\end{gathered}
\end{equation}
which is \eqref{two one entry} for $|\real \lambda| \ge \lambda_0$ and $0 < \imag \lambda \le \ep_0$.

Next, with $u$ as in \eqref{f in H1}, we slightly modify the method of estimation in \eqref{prepare to absorb}, this time finding 
\begin{equation*}
\int \chi_1^2 |u'|^2dx  \le C(\|V\|, \lambda_0) \big( |\real \lambda| + 1)^{-2} \|\chi_2 f\|^2_{L^2}  + (|\real \lambda| + 1)^2 \|\chi_2 u\|^2_{L^2} \big),
\end{equation*}
and where we recall from \eqref{cutoffs} that $\chi_2 = 1$ on $\supp \chi_1$. Combining this with \eqref{lambda square unif bd} establishes \eqref{one one entry} when $|\real \lambda| \ge \lambda_0$ and $0 < \imag \lambda \le \ep_0$

To show \eqref{one one entry} and \eqref{two one entry} hold for $|\real \lambda| \ge \lambda_0$ and $-\ep_0 < \imag \lambda < 0$, we revisit \eqref{key id for est transfer} and multiply by the appropriate power of $\lambda$. We then perform an estimate similar to \eqref{absorb resolv norm}. As needed, we invoke \eqref{line int bd} and $\|\mu^2 \chi (H - \mu^2)^{-1} \chi \|_{H^1 \to L^2} ,\, \|\mu \chi (H - \mu^2)^{-1} \chi\|_{H^1 \to H^1} \le C$ ($\mu \in \R, |\mu| \ge \lambda_0$).  

Finally, to show \eqref{e:residuecomp}, we proceed as in the proof of \cite[Theorem 2.7]{dz}. We omit the details, but take care to note that this argument does require that for each $x_0 \in \R \setminus [-R_0, R_0]$ and $a, b \in \C$, there is a unique solution $f$ to $H f = 0$ satisfying $f(x_0) = a$ and $f'(x_0) = b$; Even though $V$ is only a measure in our setting, such well-posedness still holds for the initial value problem, see \cite[Theorem 3.1]{ecte13}. (In general it is not necessary to prescribe the initial conditions outside the support of the measure, but this is sufficient for our purpose). The result is that near $\lambda  = 0$,
\begin{equation*} 
\chi (H - \lambda^2)^{-1} \chi w_1 =  \frac i { \lambda} \langle \chi u_0, w_1 \rangle_{L^2} \chi u_0+ A(\lambda)w_1, \qquad w_1 \in L^2(\R),
\end{equation*}
where $A(\lambda) : L^2(\R) \to \mathcal{D}$ is holomorphic near zero, and for some $u_0 \in H^1_{\loc}(\R) \cap H^2_{\loc}(\R \setminus [-R_0, R_0])$ with $Hu_0 = 0 $ in the sense of distributions. Hence we have \eqref{e:residuecomp}. \\
\end{proof}

\section{wave decay} \label{wave decay section}
In this Section, we combine Corollary \ref{matrix op cor}  with an argument similar to those appearing in \cite[Section 3]{vo99} and \cite[Section 4]{dash22}. We establish exponential local energy decay, modulo negative eigenvalues and a possible zero resonance, for solutions of the wave equation \eqref{wave eqn}.

First, we represent the solution to \eqref{wave eqn} via the spectral theorem of for self-adjoint operators. Additionally, we use that the proof of Lemma \ref{self adjointness lemma} shows the form domain of $(H, \mathcal{D})$ (i.e., the domain of $|H|^{1/2}$) is $H^1(\R)$. Thus, given initial conditions $w_0 \in \mathcal{D}$, $w_1 \in H^1(\R)$, the unique function $w \in C^2((0,\infty), \mathcal{H})$ with $w(0) = w_0$, $\partial_tw(0) = w_1$, $w(t) \in \mathcal{D}(H)$ for all $t > 0$, and $\partial^2_t w(t) +Hw(t)=0$, is
 \begin{equation} \label{soln spectral thm}
 \begin{gathered}
w(t) = w(\cdot, t) = \mathbf{1}_{\ge 0}(H) w(\cdot, t) + \mathbf{1}_{< 0}(H) w(\cdot, t), \\
 \mathbf{1}_{\ge 0}(H) w(\cdot, t) = \mathbf{1}_{\ge 0}(H) \big( \cos(t |H|^{1/2}) w_0 + \frac{\sin(t|H|^{1/2})}{|H|^{1/2}} w_1 \big), \\
  \mathbf{1}_{< 0}(H) w(\cdot, t) =   \mathbf{1}_{< 0}(H) \big( \cos( it |H|^{1/2}) w_0 + \frac{\sin(it|H|^{1/2})}{i|H|^{1/2}} w_1 \big) .\\
\end{gathered}
\end{equation}


\begin{theorem} \label{LED thm}
 Suppose $w_0 \in \mathcal{D}, \, w_1 \in H^1(\R)$, and $\supp w_0,\, \supp w_1 \subseteq (-R, R)$ for some $R> 0$. Let $w(t)$ be given by \eqref{soln spectral thm}. For any   $R_1 > 0$, there exist $C, c > 0$ so that 
\begin{equation} \label{LED}
\begin{gathered}
 \|  \mathbf{1}_{\ge 0}(H) w(\cdot, t) -  w_\infty(x) \|_{H^1(-R_1,R_1)} + 
    \|\partial_t  \mathbf{1}_{\ge 0}(H) w(\cdot, t)  \|_{L^2(-R_1,R_1)} \\
   \le C e^{-c t} (\| w_0\|_{H^1(\R)} + \|w_1\|_{L^2(\R)}), \qquad t > 0.
   \end{gathered}
\end{equation}
Furthermore, if $\chi \in C^\infty_0(\R ; [0,1])$ is identically one near $[-R_1, R_1] \cup [-R_0, R_0] \cup [-R, R]$, then the function $w_\infty(x)$ may be written as
   \begin{equation} \label{w infty}
       w_\infty(x) \defeq  \chi(x) u_0(x) \int_{\R} \chi  u_0 w_1,
   \end{equation}
for some real valued $u_0 \in H^1_{\loc}(\R) \cap H^2_{\loc}(\R \setminus [-R_0, R_0])$ with $Hu_0 = 0 $ in the sense of distributions (and if the continued operator \eqref{matrix op cutoffs} does not have a pole at $\lambda = 0$, we may take $u_0 \equiv 0$.)
\end{theorem}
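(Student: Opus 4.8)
The plan is to represent $\chi\,\mathbf 1_{\ge 0}(H)w(\cdot,t)$ and $\chi\,\partial_t\mathbf 1_{\ge 0}(H)w(\cdot,t)$ as contour integrals of the meromorphically continued cutoff resolvent $\chi(G+\lambda)^{-1}\chi$ applied to the data, to deform the contour off the real axis into the strip furnished by Corollary~\ref{matrix op cor}, and to read off $w_\infty$ as the residue at $\lambda=0$ and the $e^{-ct}$ decay from the displaced contour; this is the strategy of \cite[Section~3]{vo99} and \cite[Section~4]{dash22}. To reduce to the cutoff resolvent, fix $\chi\in C_0^\infty(\R;[0,1])$ equal to $1$ near $[-R_1,R_1]\cup[-R_0,R_0]\cup[-R,R]$, so $\chi w_0=w_0$, $\chi w_1=w_1$, and $\|\mathbf 1_{\ge 0}(H)w(\cdot,t)\|_{H^1(-R_1,R_1)}\le\|\chi\,\mathbf 1_{\ge 0}(H)w(\cdot,t)\|_{H^1}$, and likewise for $\partial_t$. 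Regard $(w(t),\partial_t w(t))^T$ as the two components of $e^{itG}(w_0,w_1)^T$.

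Using \eqref{soln spectral thm} together with Stone's formula on the subspace $\mathbf 1_{\ge 0}(H)$ --- i.e. expressing $\cos(t\sqrt H)\mathbf 1_{\ge 0}(H)$, $\sin(t\sqrt H)|H|^{-1/2}\mathbf 1_{\ge 0}(H)$ and their $t$-derivatives through the jump of $(H-\lambda^2)^{-1}$ across $(0,\infty)$ and then symmetrising in $\lambda$ --- one obtains for $t>0$ an identity in which $\chi\,\mathbf 1_{\ge 0}(H)(w(t),\partial_t w(t))^T$ is a finite sum of integrals over $\R$ of $e^{\pm it\lambda}$ against the cutoff operators $\chi(H-(\lambda+i0)^2)^{-1}\chi$, $\lambda\,\chi(H-(\lambda+i0)^2)^{-1}\chi$ and $\chi H(H-(\lambda+i0)^2)^{-1}\chi$ applied to $w_0$ or $w_1$ --- precisely the entries of $\chi(G+(\lambda+i0))^{-1}\chi$ from \eqref{inv G plus lambda}. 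This representation is verified first for Schwartz data and extended by density; its convergence uses the $\lambda$-decay discussed below.

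By Theorem~\ref{unif resolv est thm} and Corollary~\ref{matrix op cor}, these cutoff operators continue holomorphically to a strip $\{|\real\lambda|\ge\lambda_0,\ |\imag\lambda|\le\ep_0\}$ with the uniform bounds packaged in \eqref{matrix op unif bd}, have no pole on $\R\setminus\{0\}$, and have at worst a simple pole at $\lambda=0$. Since the resonance set is discrete and $H$ has only finitely many negative eigenvalues (so the poles of $(H-\lambda^2)^{-1}$ on $i\R_{>0}$ stay a positive distance from the real axis), we may shrink $\lambda_0,\ep_0$ so that the only pole within distance $\ep_0$ of $\R$ is possibly $\lambda=0$ and the line $\{\imag\lambda=\ep_0\}$ lies below all poles on $i\R_{>0}$. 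Then we push each $e^{it\lambda}$-integral up to $\{\imag\lambda=\ep_0\}$ and each $e^{-it\lambda}$-integral down to $\{\imag\lambda=-\ep_0\}$, legitimate since $|e^{\pm it\lambda}|=e^{-\ep_0 t}$ on these lines and the integrands decay in $\lambda$. Only the downward deformations cross a pole, at $\lambda=0$; since that pole is simple, $e^{-it\lambda}=1+O(\lambda)$, the maps $\lambda\mapsto\lambda\,\chi(H-\lambda^2)^{-1}\chi$ and $\lambda\mapsto\chi H(H-\lambda^2)^{-1}\chi$ are regular at $0$, and $\lambda\mapsto\chi(H-\lambda^2)^{-1}\chi$ has residue $i\langle\chi u_0,\cdot\,\rangle_{L^2}\chi u_0$ there (as in the proof of \eqref{e:residuecomp}), the total residue is $t$-independent and equals $(w_\infty,0)^T$ on $(-R_1,R_1)$ with $w_\infty$ as in \eqref{w infty}; if $\lambda=0$ is not a pole this term is absent and $u_0\equiv 0$ works, and in all cases the second (i.e. $\partial_t$) component of the residue vanishes, which is why no correction is subtracted there.

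For the estimate, on $\{\imag\lambda=\pm\ep_0\}$ one has $|e^{\mp it\lambda}|=e^{-\ep_0 t}$, and by \eqref{matrix op unif bd} $\|\chi(G+\lambda)^{-1}\chi\|_{H^1\oplus L^2\to H^1\oplus L^2}$ is bounded uniformly for $|\real\lambda|\ge\lambda_0$ and on the remaining compact arc; to make the infinite-line integrals converge absolutely one exploits that $w_0\in\mathcal{D}$ (so $H\chi w_0\in L^2$ with $\|H\chi w_0\|_{L^2}\le C\|w_0\|_{\mathcal{D}}$) and $w_1\in H^1$: the resolvent identity $\chi(H-\lambda^2)^{-1}\chi=-\lambda^{-2}\chi^2+\lambda^{-2}\chi(H-\lambda^2)^{-1}\chi_1 H\chi$ (with $\chi_1\equiv 1$ near $\supp\chi$, so $(1-\chi_1)H\chi\equiv 0$) combined with \eqref{unif resolv est} gives that every entry of $\chi(G+\lambda)^{-1}\chi$ applied to $(w_0,w_1)^T$ is $O(\langle\lambda\rangle^{-1})$ in the relevant norm, the borderline term being $i\,\chi H(H-\lambda^2)^{-1}\chi w_0=i\,\chi(H-\lambda^2)^{-1}\chi_1 H\chi w_0=O(\langle\lambda\rangle^{-1})$ in $L^2$; one integration by parts in $\lambda$ then gains a factor $t^{-1}$ and improves the integrand to $O(\langle\lambda\rangle^{-2})$, yielding a bound $\le C t^{-1}e^{-\ep_0 t}(\|w_0\|_{H^1}+\|w_1\|_{L^2})$ for $t\ge 1$, while for $0<t\le 1$ the bound \eqref{LED} is immediate from the boundedness of $e^{itG}$ on the energy space. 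Taking $c=\ep_0/2$ absorbs the $t^{-1}$. The main obstacle is exactly this last bookkeeping: $\chi(G+\lambda)^{-1}\chi$ applied to $\mathcal{D}\oplus H^1$ data decays only borderline-integrably in $\lambda$, so one must carry out the $\lambda$-integration by parts and control $\partial_\lambda$ of the cutoff resolvent, i.e. products $\chi(H-\lambda^2)^{-1}\chi_1(H-\lambda^2)^{-1}\chi$, uniformly on the displaced contours; carefully justifying the Stone-formula representation for the matrix propagator and the deformation across $\lambda=0$ (discreteness of resonances, finiteness of the negative spectrum) is the other technical point, although conceptually the whole argument is standard.
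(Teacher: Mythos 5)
Your proposal takes a genuinely different route from the paper. You propose to express $\chi\,\mathbf 1_{\ge 0}(H)w(\cdot,t)$ and its $t$-derivative directly via Stone's formula for the spectral measure of $H$ on $[0,\infty)$, then deform the resulting real-line $\lambda$-contour into the resonance-free strip furnished by Corollary~\ref{matrix op cor}, read off $w_\infty$ from the simple residue at $\lambda=0$ via \eqref{e:residuecomp}, and extract pointwise-in-$t$ decay by integrating by parts once in $\lambda$ to force $O(\langle\lambda\rangle^{-2})$ integrability. The paper instead multiplies the propagator by a temporal cutoff $\varphi(t)$ vanishing near $t=0$, Fourier-transforms in $t$ to get the contour representation $W(t)f=\int_{\imag\lambda=\varepsilon}e^{-it\lambda}(G+\lambda)^{-1}(i\varphi'Uf)\check{\;}(\lambda)\,d\lambda$ (so that the transform is of a compactly supported, bounded function of $t$ and needs no $\lambda$-decay to make sense), deforms the contour picking up the same residue, and then bounds the shifted term $W_1(t)f$ \emph{in $L^2_t$} by Plancherel together with the uniform bound \eqref{matrix op unif bd}, converting the $L^2_t$ bound to a pointwise $(1+t^{1/2})$ bound through the ODE identity $(\partial_t-iG)W_1=W_2$ and the commutator relation \eqref{commutator id}. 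Both are classical strategies; the paper's Plancherel-in-$t$ route is tailored precisely to avoid the two steps where your sketch has genuine gaps.

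The first gap is the control of $\partial_\lambda\bigl(\chi(G+\lambda)^{-1}\chi\bigr)$ on the displaced contours. You write this derivative as a product $\chi(H-\lambda^2)^{-1}\chi_1(H-\lambda^2)^{-1}\chi$, but on the meromorphic continuation one has $\partial_\lambda\,\chi(H-\lambda^2)^{-1}\chi=2\lambda\,\chi(H-\lambda^2)^{-1}(H-\lambda^2)^{-1}\chi$ with \emph{no cutoff between the two resolvent factors}. Inserting $\chi_1+(1-\chi_1)$ leaves an extra term $2\lambda\,\chi(H-\lambda^2)^{-1}(1-\chi_1)(H-\lambda^2)^{-1}\chi$ which must be reduced to free-resolvent and commutator pieces by manipulations of exactly the sort carried out in \eqref{key id for est transfer}; you flag this as ``the main obstacle'' but do not resolve it, and until it is resolved the $\lambda$-integration by parts is not available. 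The second gap is that the final estimate is claimed in the energy norm $\|w_0\|_{H^1}+\|w_1\|_{L^2}$, yet the $O(\langle\lambda\rangle^{-1})$ decay that you use to make the contour integrals converge depends on $\|Hw_0\|_{L^2}$ (the $\mathcal D$-graph norm of $w_0$) and on $\|w_1\|_{H^1}$, so the quantitative bound your argument delivers would involve these \emph{stronger} norms, not the energy norms appearing in \eqref{LED}. The paper's argument works entirely at the $H^1\oplus L^2$ level: Plancherel and \eqref{matrix op unif bd} bound $\int\|W_1(t)f\|^2\,dt$ by $\|f\|^2$ without any pointwise $\lambda$-decay of the resolvent applied to the data, and the ODE identity converts this into the pointwise estimate $\|W_1(t)f\|\le C(1+t^{1/2})\|f\|$, so the claimed $\|w_0\|_{H^1}+\|w_1\|_{L^2}$ dependence comes out automatically. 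To rescue your version you would either need to run the $\lambda$-derivative bookkeeping carefully (including the cutoff-insertion manipulations above) and then interpolate between a decaying estimate in the strong norm and a bounded estimate in the energy norm, or switch to an $L^2_t$-based argument as in the paper.
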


\begin{proof}

Choose $\chi \in C_0^\infty(\mathbb R; [0,1])$ such that $\chi =1$ near $[-R_1, R_1] \cup [-R, R] \cup [R_0, R_0]$ (where as before $\supp V  \subseteq [-R_0, R_0]$). From Corollary \ref{matrix op cor}, for any $\lambda_0 > 0$, there exist $C$, $\varepsilon_0 > 0$ such that
\begin{equation} \label{matrix op unif bd from cor} 
\|\chi(G + \lambda)^{-1}\chi f\| \le C\|f\|,
\end{equation}
whenever $|\real \lambda | \ge \lambda_0$ and $|\imag \lambda| \le \varepsilon_0$, where here and for the rest of this Section, all norms are $H^1(\R) \oplus L^2(\R)$ unless otherwise specified.

We have
\begin{equation*}
\begin{gathered}
\mathbf{1}_{\ge 0}(H)w(t) = \mathbf{1}_{\ge 0}(H)\big( \cos(t |H|^{1/2}) w_0 + \sin(t|H|^{1/2})|H|^{-1/2} w_1\big),\\
\partial_t \mathbf{1}_{\ge 0}(H) w(t) = \mathbf{1}_{\ge 0}(H) \big( -\sin(t|H|^{1/2}) |H|^{1/2} w_0 + \cos(t|H|^{1/2})w_1 \big), \\
\partial_t^2 \mathbf{1}_{\ge 0}(H) w(t) = - H \mathbf{1}_{\ge 0}(H) w(t).
\end{gathered}
\end{equation*}
Consequently, after defining
\[
 f \defeq \left( \begin{array}{c}w_0\\w_1 
\end{array} \right), \qquad U(t) f \defeq \left( \begin{array}{c} 
\mathbf{1}_{\ge 0}(H) w(t) \\
\partial_t \mathbf{1}_{\ge 0}(H) w(t)\end{array}\right),
\]
we have
\begin{equation}\label{e:utfbound}
\|U(t) f \| \le C(1 + |t|)\|f\|, \qquad \partial_t  U(t) f =i G U(t) f, \qquad U(t)U(s) f =  U(t+s)f,
\end{equation}
for all real $t$ and $s$, and for some $C>0$ independent of $t$ and $f$. 

Take $\varphi \in C^\infty(\mathbb R; [0,1])$ which is $0$ near $(-\infty,1]$ and $1$ near $[2,\infty)$ and put
\begin{equation*} 
 W(t) f \defeq \varphi(t) U(t) f =   \int_{\imag \lambda = \varepsilon}  e^{-it\lambda} \check W(\lambda) \, d \lambda, \qquad \check W(\lambda) \defeq \frac 1 {2\pi} \int_{\R} e^{is\lambda}W(s)fds, \qquad \ep > 0.
\end{equation*}
We compute $\partial_t W(t)f = \varphi'(t) U(t)f + i G W(t)f$, and therefore find
\begin{equation} \label{before deform contour}
 W(t) f = \int_{\imag \lambda = \varepsilon}  e^{-it\lambda} (G + \lambda)^{-1}(i\varphi' U f)\check{~}(\lambda)\,d \lambda, \qquad \ep >0.
\end{equation}

Since $w_0$, $w_1$ and $V$ have compact support, finite propagation speeds holds for the solution \eqref{soln spectral thm}. Therefore, increasing $R > 0$ if necessary, we have that, $x \mapsto U(t)f$ is supported in $(-R,R)$ for all $t \in [0,2]$. By continuity of integration, the same is true of $x \mapsto (i\varphi' U f)\check{~}(\lambda)$ for every $\lambda$. Hence $ \lambda \mapsto (i\varphi' U f)\check{~}(\lambda)$ is entire and rapidly decaying as  $|\real \lambda| \to \infty$ with $|\imag \lambda|$ remaining bounded and further $(i\varphi' U f)\check{~}(\lambda) = \chi(i\varphi' U f)\check{~}(\lambda)$. 

By \eqref{matrix op unif bd from cor}, there exists $\varepsilon > 0$ small enough so that, within the strip $|\imag \lambda| < 2\varepsilon$, either $\chi (G+\lambda)^{-1}\chi$ has no poles, or just a pole at $\lambda = 0$. Deforming the contour in \eqref{before deform contour}, by the residue theorem, we find
\begin{equation*}
\begin{split}
\chi  W&(t)f \\
&= -2\pi i \Res{}_{\lambda = 0}  (e^{-it\lambda} \chi (G + \lambda)^{-1}\chi (i\varphi' U f)\check{~}(\lambda)) + \int_{\imag \lambda = -\varepsilon}  e^{-it\lambda} \chi (G + \lambda)^{-1}\chi (i\varphi' U f)\check{~}(\lambda) d \lambda.  \\
&= \lim_{\lambda \to 0} \lambda \chi (G+\lambda)^{-1} \chi \int_{\R} \varphi'(s) U(s) f\,ds + \int_{\imag \lambda = -\varepsilon}  e^{-it\lambda} \chi (G + \lambda)^{-1}\chi (i\varphi' U f)\check{~}(\lambda) d \lambda. 
\end{split}
\end{equation*}
To simplify this, use \eqref{e:residuecomp} and put
\[
 W_1(t)f \defeq \int_{-\infty}^\infty e^{-it\lambda} \chi (G + \lambda - i \varepsilon)^{-1} \chi (i\varphi' U f)\check{~}(\lambda - i \varepsilon)\, d \lambda,
\]
to obtain
\[
\chi W(t) f = \begin{pmatrix} \chi u_0 \int_{\R} \int_0^2  \chi(x) u_0(x) \varphi'(s) \partial_sw(s,x)ds dx \\ 0 \end{pmatrix} + e^{-\varepsilon t}  W_1(t)f. 
\]
To simplify the first term, we integrate by parts in $s$, using $\varphi' = -(1-\varphi)'$, to obtain
\[
\int_{\R} \int_0^2   \chi(x) u_0(x) \varphi'(s) \partial_sw(s,x)\,ds\,dx =\int_{\R} \chi u_0 w_1 +\int_{\R} \int_0^2  \chi(x) (1-\varphi(s)) \partial_s^2w(s,x)\,ds\,dx. 
\]
Now observe that $\partial_s^2 w = - H w$, $\chi u_0 \in \mathcal{D}$, so
\begin{equation*}
\langle \chi u_0, H w(s) \rangle_{L^2} = \langle H \chi u_0, w(s) \rangle_{L^2} = \langle ([H, \chi] + H )u_0, w(s) \rangle_{L^2} = 0, \qquad  \text{ for } s \in [0,2],
\end{equation*}
the last equality following from $\chi = 1$ near $[-R, R]$ and $\supp w(s) \subseteq (-R, R)$ for $s \in [0,2]$). Thus
\[
\chi W(t) f =  \begin{pmatrix} \langle \chi u_0, w_1 \rangle_{L^2} \chi u_0  \\ 0  \end{pmatrix}+ e^{-\varepsilon t} W_1(t)f.
\]

It now suffices to show that
\[
\| W_1(t)f\| \le C e^{\varepsilon t/2}\|f\|.
\]
To prove this, we first use Plancherel's theorem, along with the fact that by \eqref{e:utfbound}, the operator norm  $\|U(t)\|_{H^1(\R) \oplus L^2(\R) \to H^1(\R) \oplus L^2(\R)}$ is uniformly bounded for all $t \in \mathbb R$, as well as the fact that by Corollary \ref{matrix op cor}, the operator norm $\|\chi (G+\lambda-i\varepsilon)^{-1} \chi\|_{H^1(\R) \oplus L^2(\R) \to H^1(\R) \oplus L^2(\R)}$ is uniformly bounded for all $\lambda \in \R$, to obtain
\begin{equation}\label{e:planch}\begin{split}
\int  \| W_1(t)f\|^2 \, dt &= C \int \|\chi (G + \lambda - i \varepsilon)^{-1} \chi (\varphi' U f)\check{~}(\lambda - i \varepsilon)\|^2 \, d \lambda \\
&\le C_\ep \int \|(\varphi' U f)\check{~}(\lambda - i \varepsilon)\|^2 \, d \lambda \\ &= C_\ep \int e^{2\varepsilon t}  \|\varphi'(t) U(t) f\|^2 \, d t \le C_\ep \|f\|^2.
\end{split}\end{equation}

Next, let $\tilde{\chi} \in C^\infty_0(\R ; [0,1])$ with $\tilde{\chi} = 1$ on $\supp \chi$. Observe that
\begin{equation} \label{commutator id}
G \chi (G + \lambda)^{-1} \chi = [G, \chi]  \tilde{\chi} (G + \lambda + i\ep)^{-1} \tilde{\chi} \chi  - \lambda \chi (G + \lambda)^{-1} \chi  + \chi^2
\end{equation}
holds initially for $\imag \lambda \gg 1$ by \eqref{inv G plus lambda}, and continues meromorphically to $\C$ by \eqref{matrix op cutoffs}. In particular, decreasing $\ep$ if necessary, \eqref{matrix op unif bd} implies that \eqref{commutator id} holds for everywhere in $|\imag \lambda | < 2\ep$, except possibly at $\lambda = 0$. Therefore, setting,
\begin{equation*}
 \widetilde W_1(t)f \defeq \int_{-\infty}^\infty e^{-it\lambda} \tilde{\chi} (G + \lambda - i \varepsilon)^{-1} \tilde{\chi} (i\varphi' U f)\check{~}(\lambda - i \varepsilon)\, d \lambda,
\end{equation*}
we have
\begin{equation*}
 (\partial_t - i G) W_1(t)f = - i [G,\chi] \widetilde W_1(t)f + \varepsilon W_1(t)f - i  \int e^{-it\lambda }(i\varphi' U f)\check{~}(\lambda - i \varepsilon)\, d \lambda \qefed W_2(t)f.
\end{equation*}
Integrating both sides of 
\begin{equation*}
\begin{split}
 \partial_s( U(t-s) W_1(s)f)  &= -i GU(t-s) W_1(s) f + U(t-s)\big(iGW_1(s)f + W_2(s) f\big) \\
 &= U(t-s) W_2(s)f
 \end{split}
\end{equation*}
 from $s=0$ to $s=t$ gives
\begin{equation*}
  W_1(t)f  = U(t) W_1(0)f  + U(t)\int_0^t U(-s) W_2(s)f\,ds.
  \end{equation*}
Thus
\begin{equation*}
\begin{split}
 \| W_1(t) f\| &\le C(1 + t)\big(\|f\| +  \int_0^t (1 + s)\| W_2(s)f\| ds \big) \\
 &\le C(1 + t)\big( \|f\| +  \big(\frac{t^3}{3} + t^2 + t\big)^{1/2} \Big(\int_0^t \| W_2(s)f\|^2 ds\Big)^{1/2} \big).
\end{split}
\end{equation*}
Now check that, since $\|[G,\chi] \widetilde W_1(t)f\| \le C\|\widetilde W_1(t)f\|$, calculating as in \eqref{e:planch}, we obtain \\ $\int \|W_2(s)f\|^2\,ds \le C \|f\|^2$, and hence
\[
\|W_1(t)f\| \le C (1 + t^{5/2}) \|f\|
\]
as desired.\\
\end{proof}

\medskip

\noindent{\textsc{Acknowledgements:}} It is a pleasure to thank Kiril Datchev for helpful discussions. We also wish to thank the anonymous referees, whose valuable comments helped improve this article.

\medskip

\noindent{\textsc{Declarations, Funding and/or Conflicts of interests/Competing interests:}} JS gratefully acknowledges support from ARC DP180100589, NSF DMS 2204322, and from a 2023 Fulbright Future Scholarship funded by the Kinghorn Foundation and hosted by University of Melbourne. On behalf of all authors, the corresponding author states that there is no conflict of interest.\\

\noindent{\textsc{Data Availability Statement:}} Data sharing not applicable to this article as no datasets were generated or analyzed in this study.

\medskip


\begin{thebibliography}{0}

\bibitem[AmRe05]{amre05} B. Amor and C. Remling. Direct and inverse spectral theory of one-dimensional Schr\"odinger operators with measures. \textit{Integral Equations Operator Theory} 52(3) (2005), 395--417

\bibitem[AGHH05]{smq} S. Albeverio, F. Gesztesy, R. Hoegh-Krohn, H. Holden. Solvable models in quantum mechanics, 2nd ed. AMS Chelsea Publishing, Providence, RI (2005) 

\bibitem[BEKS92]{beks92} J. Brasche, P. Exner, A. Kuperin, P. \v{S}eba. Schr\"odinger operators with singular interactions. \textit{J. Math. Anal. Appl.} 184(1) (1994), 112--139

\bibitem[Bu98]{bu98} N. Burq. D\'ecroissance de l'\'energie locale de l'\'equation des ondes pour le probl\`eme ext\'erieur et absence de r\'esonance au voisinage du r\'eel. \textit{Acta Math.} 180(1) (1998), 1--29

\bibitem[Bu02]{bu02} N. Burq, \textit{Lower bounds for shape resonances widths of long range Schr\"odinger operators}. Amer. J. Math., 124(4) (2002), 677--735

\bibitem[BGT04]{bgt04} N. Burq, P. G\'erard, N. Tzvetkov. On nonlinear Schr\"odinger equations in exterior domains. \textit{Ann. Inst. H. Poincare\'e Anal. Non Lin\'eaire} 21(3) (2004), 295-318

\bibitem[CaVo02]{cv} F. Cardoso and G. Vodev. Uniform Estimates of the Resolvent of the Laplace-Beltrami Operator on Infinite Volume Riemannian Manifolds. II. \textit{Ann. Henri Poincar\'e} 4(3) (2002), 673--691

\bibitem[Da14]{da} K. Datchev. Quantitative limiting absorption  principle in the semiclassical limit. \textit{Geom. Func. Anal.} 24(3) (2014), 740--747

\bibitem[DGS23]{dgs23} K. Datchev, J. Galkowksi, and J. Shapiro. Semiclassical resolvent bounds for compactly supported radial potentials. \textit{J. Funct. Anal.} 284(7), paper no. 109835 (2023), 28pp.

\bibitem[DadH16]{ddh} K. Datchev and M. V. de Hoop. Iterative reconstruction of the wavespeed for the wave equation with bounded frequency boundary data.  \textit{Inverse Problems} 32(2) (2016),  025008, 21 pp. 

\bibitem[DaJi20]{daji20} K. Datchev and L. Jin. Exponential lower resolvent bounds far away from trapped sets. \textit{J. Spectr. Theory} 10(2) (2020), 617-649 

\bibitem[DaMa22]{dama22} K. Datchev and N. Malawo. Semiclassical resonance asymptotics for the delta potential on the half line. \textit{Proc. Amer. Math. Soc.} 150(11) (2022), 4909--4921

\bibitem[DMW22]{dmw22} K. Datchev, J. Marzuola, and J. Wunsch. Newton polygons and resonances of multiple delta-potentials. arXiv 2208.07901

\bibitem[DaSh20]{dash20} K. Datchev and J. Shapiro. Semiclassical Estimates for Scattering on the Real Line. \textit{Comm. Math. Phys.} 376(3) (2020), 2301--2308

\bibitem[DaSh23]{dash22} K. Datchev and J. Shapiro. Exponential time decay for a one dimensional wave equation with coefficients of bounded variation. \textit{Math. Nachr.} 296(11) (2023), 4978--4994

\bibitem[DyZw19]{dz} S. Dyatlov and M. Zworski. Mathematical Theory of Scattering Resonances. Graduate Studies in Mathematics 200. American Mathematical Society, Providence, RI (2019)

\bibitem[EGNT13]{egnt13} J. Eckhardt, F. Gesztesy, R. Nichols, and G. Teschl. Weyl-Titchmarsh theory for Sturm-Liouville operators with distributional potentials. \textit{Opuscula Math.} 33(3) (2013), 467--563

\bibitem[EKMT14]{ekmt14} J. Eckhardt, A. Kostenko, M. Malamud, and G. Teschl. One-dimensional Schr\"odinger operators with $\delta'$-interactions on Cantor-type sets. \textit{J. Differential Equations} 257(2) (2014), 415--449

\bibitem[EcTe13]{ecte13} J. Eckhardt and G. Teschl. Sturm-Liouville operators with measure-valued coefficients. \textit{J. Anal. Math.} 120 (2013), 151--224

\bibitem[Fo99]{fo} G. Folland. Real analysis: modern techniques and their applications, 2nd ed. Wiley, New York (1999) 

\bibitem[GaSh22]{gash22b} J. Galkowski and J. Shapiro. Semiclassical resolvent bounds for long range Lipschitz potentials \textit{Int. Math. Res. Not. IMRN} 2022 (18) (2022), 14134--14150 

\bibitem[Gal19]{gal19} J. Galkowski. Distribution of resonances in scattering by thing barriers. \textit{Mem. Amer. Math. Soc.} 259(1248) (2019), ix+152pp.

\bibitem[Ga19]{ga19} O. Gannot. Resolvent estimates for spacetimes bounded by Killing horizons. \textit{Anal. PDE.} 12(2) (2019), 537--560.

\bibitem[Ha82]{ha82} E. Harrell II. General Lower Bounds for Resonances in One Dimension. \textit{Commun. Math. Phys.} 86(2) (1982), 221--225

\bibitem[Hi99]{hi99} M. Hitrik. Bounds on Scattering Poles in One Dimension. \textit{Commun. Math. Phys.} 208(2) (1999), 381--411

\bibitem[HrMy01]{hrmy01} R. Hryniv and Y. Mykytyuk. 1-D Schr\"odinger operators with periodic singular potentials. \textit{Methods Funct. Anal. Topology} 7(4) (2001), 31--42

\bibitem[HrMy12]{hrmy12} R. Hryniv and Y. Mykytyuk. Self-adjointness of Schr\"odinger operators with singular potentials. \textit{Methods Funct. Anal. Topology} 18(2) (2012), 152--159

\bibitem[GaSh22a]{gash22a} J. Galkowski and J. Shapiro. Semiclassical resolvent bounds for weakly decaying potentials. \textit{Math. Res. Lett}. 29(2) (2022), 373--398

\bibitem[GaSh22b]{gash22b} J. Galkowski and J. Shapiro. Semiclassical resolvent bounds for long range Lipschitz potentials. \textit{Int. Math. Res. Not. IMRN} 2022(18) (2022), 14134--14150

\bibitem[KlVo19]{klvo19} F. Klopp and M. Vogel. Semiclassical resolvent estimates for bounded potentials. \textit{Pure Appl. Anal.} 1(1) (2019), 1--25

\bibitem[LSW22]{lsw22} M. Luki\'c, S. Sukhtaeiv, and X. Wang. Spectral properties of Schr\"odinger operators with locally $H^{-1}$ potentials. arXiv 2206.07079
 
 \bibitem[Ob23]{ob23} D. Obovu. Resolvent bounds for Lipschitz potentials in dimension two and higher with singularities at the origin. arXiv:2301.13129
 
  \bibitem[RoTa15]{rota15} I. Rodnianski and T. Tao. Effective Limiting Absorption Principles, and Applications. \textit{Commun. Math. Phys.} 333(1) (2015), 1--95
 
 \bibitem[ReSi78]{rs78} Michael Reed and Barry Simon, \textit{Methods of Modern Mathematical Physics IV. Analysis of Operators}. Academic Press, Inc., 1978.

\bibitem[Sa16]{sa16} A. Sacchetti. Quantum resonances and time decay for a double-barrier model. \textit{J. Phys. A: Math. Theor.} 49(17) (2016), 17501, 20pp

\bibitem[SaSh99]{sash99} A. Savchuk and A. Shkalikov. Sturm-Liouville operators with singular potentials. \textit{Mat. Zameki} 66(6) (1999), 897--912 (Russian); English transl. \textit{Math. Notes} 66(5-6) (1999), 741--753

\bibitem[Sh18]{sh18}
J. Shapiro. Local energy decay for Lipschitz wavespeeds. \textit{Comm. Partial Differential Equations} 43(5) (2018), 839--858

\bibitem[Sh19]{sh19} J. Shapiro. Semiclassical resolvent bounds in dimension two. \textit{Proc. Amer. Math. Soc.} 147(5) (2019), 1999--2008

\bibitem[Sh20]{sh20} J. Shapiro. Semiclassical resolvent bound for compactly supported $L^\infty$ potentials. \textit{J. Spectr. Theory.} 10(2) (2020), 651-672

\bibitem[Sh24]{sh24} J. Shapiro. Semiclassical resolvent bounds for short range $L^\infty$ potentials with singularities at the origin. \textit{Asymptot. Anal.} 136(3-4) (2024), 157--180  

\bibitem[SjZw91]{sz91} J. Sj\"ostrand and M. Zworski. Complex Scaling and the Distribution of Scattering Poles. \textit{J. Amer. Math. Soc.} 4(4) (1991), 729--769

\bibitem[TaZw]{tz} S.-H. Tang and M. Zworski. Potential Scattering on the Real Line. Notes available at\\ \url{https://math.berkeley.edu/~zworski/tz1.pdf}
 
 \bibitem[Te14]{te} G. Teschl. Mathematical methods in quantum mechanics, with applications to Schr\"odinger operators, 2nd ed. Graduate 
Studies in Mathematics 200. American Mathematical Society, Providence, RI (2014)

\bibitem[Vo99]{vo99} G. Vodev. On the uniform decay of the local energy.  \textit{Serdica Math.} J. 25(3) (1999), 191--206

\bibitem[Vo14]{vo14}  G. Vodev. Semi-classical resolvent estimates and regions free of resonances. \textit{Math. Nach.} 287(7) (2014), 825--835

\bibitem[Vo19]{vo19} G. Vodev. Semiclassical resolvent estimates for short-range $L^\infty$ potentials. \textit{Pure Appl. Anal.} 1(2) (2019), 207--214

\bibitem[Vo20a]{vo20a} G. Vodev. Semiclassical resolvents estimates for $L^\infty$ potentials on Riemannian manifolds. \textit{Ann. Henri Poincar\'e.} 21(2) (2020), 437--459

\bibitem[Vo20b]{vo20b} G. Vodev. Semiclassical resolvent estimates for short-range $L^\infty$ potentials. II. \textit{Asymptot. Anal.} 118(4) (2020), 297--312

\bibitem[Vo20c]{vo20c} G. Vodev. Semiclassical resolvent estimates for H\"older potentials. \textit{Pure Appl. Anal.} 2(4) (2020), 841--860

\bibitem[Vo21]{vo21} G. Vodev. Improved resolvent bounds for radial potentials, \textit{Lett. Math. Phys.} 111(1) (2020), 21pp

\bibitem[Vo22]{vo22} G. Vodev, Improved resolvent bounds for radial potentials. II. \textit{Arch. Math.} 119(4) (2022), 427--438

\bibitem[Zw87]{zw87} M. Zworski. Distribution of Poles for Scattering on the Real Line. \text{J. Func. Anal.}, 73(2) (1987), 277--296.

\end{thebibliography}
\end{document}